\newcommand{\p}{\mathbf{p}}
\newcommand{\D}{\mathbf{\Delta}}
\renewcommand{\L}{\mathbf{\Lambda}}
\renewcommand{\a}{\mathbf{a}}
\renewcommand{\i}{\mathbf{i}}
\renewcommand{\k}{\mathbf{k}}
\def\ie{\rm{i.e.\ }}
\def\eg{\rm{e.g.\ }}
\theoremstyle{plain}
\newtheorem{theorem}{Theorem} 
\newtheorem{lemma}{Lemma}
\newtheorem{proposition}{Proposition}
\theoremstyle{definition}
\newtheorem{definition}{Definition}
\numberwithin{equation}{section}
\begin{document}

\title{Exponential Random Simplicial Complexes}

\author[K. Zuev, O. Eisenberg, and D. Krioukov]{Konstantin Zuev, Or  Eisenberg, and Dmitri Krioukov}
\address{Department of Physics, Department of Mathematics, Department of Electrical\&Computer Engineering, Northeastern University, Boston, MA 02115, USA}
\email{k.zuev@neu.edu, eisenberg.o@husky.neu.edu, dima@neu.edu}

\begin{abstract}
Exponential random graph models have attracted significant research attention over the past decades. These models are maximum-entropy ensembles subject to the constraints that the expected values of a set of graph observables are equal to given values. Here we extend these maximum-entropy ensembles to random simplicial complexes, which are more adequate and versatile constructions to model complex systems in many applications. We show that many random simplicial complex models considered in the literature can be casted as maximum-entropy ensembles under certain constraints. We introduce and analyze the most general random simplicial complex ensemble $\D$ with statistically independent simplices. Our analysis is simplified by the observation that any distribution $\mathbb{P}(O)$ on any collection of objects $\mathcal{O}=\{O\}$, including graphs and simplicial complexes, is maximum-entropy subject to the constraint that the expected value of $-\ln \mathbb{P}(O)$ is equal to the entropy of the distribution. With the help of this observation, we prove that ensemble $\D$ is maximum-entropy subject to the  two types of constraints which fix the expected numbers of simplices and their boundaries.

\smallskip
\noindent \textbf{Keywords.} Random simplicial complexes, random graphs, maximum-entropy distributions, exponential random graphs model, network models.

\end{abstract}
\maketitle

\section{Introduction}

When studying complex systems consisting of many interconnected, interacting components, it is rather natural to represent the system as a graph or, more generally, as a simplicial complex. Modeling complex systems with graphs has proved to be useful for understanding systems as intricate as the Internet, the human brain, and interwoven social groups, and has led to a new area of research, called network science \cite{Newman10-book,Kleinberg10-book,Dorogovtsev10-book}.

A host of developed network models (\eg see \cite{Goldenberg10} for a survey) can be roughly divided into two classes: ``generative'' models and ``descriptive'' models \cite{ABFGXZ07}. Generative models are  algorithms which describe how to generate a network using some probabilistic rules for connecting nodes. These models primarily aim to uncover the hidden evolution mechanisms responsible for certain properties observed in real networks. A classical, and perhaps the simplest and best studied, example of a generative model is the Erd\H{o}s--R\'{e}nyi random graph $G(n,p)$ \cite{SolRap,ErRe59,ErRe60}: given $n$ nodes, place a link between every two nodes independently at random with probability $p$. Among other prominent examples are the preferential attachment model \cite{BarAlb99,KraReLe00,DoMeSa00} and the small-world model \cite{WatStr98,newmanwatts99,watts99} which explain the power-law degree distributions and small distances between most nodes, two universal properties observed in many real networks. Any generative model gives rise to an ensemble $(\mathcal{G},\mathbb{P})$, where $\mathcal{G}$ is a set of all graphs the model can possibly generate and $\mathbb{P}$ is the probability distribution on $\mathcal{G}$, where $\mathbb{P}(G)$ is the probability that the model generates $G\in\mathcal{G}$. One can always readily sample from $\mathbb{P}$ (using the network generating algorithm), but often cannot obtain a closed-form expression for $\mathbb{P}(G)$, or even implicitly describe $\mathbb{P}$  as a solution of some optimization problem equation.

Generative models can help to understand the fundamental organizing principles behind real networks and explain their qualitative behavior, but they are not specifically designed for network data analysis. Descriptive models attempt to fill this gap. A descriptive model is explicitly defined as an ensemble $(\mathcal{G}, \mathbb{P}_\theta)$, where $\mathcal{G}$ is a set of graphs and $\mathbb{P}_\theta$ is the joint probability distribution on $\mathcal{G}$ parameterized by a vector of parameters $\theta$, which are to be inferred from the observed network data. For any graph $G\in\mathcal{G}$, a descriptive model gives a closed-form expression for  $\mathbb{P}_\theta(G)$ which can be used for further statistical inference, \eg for estimating ensemble averages $\sum_{G\in\mathcal{G}}x(G)\mathbb{P}_\theta(G)$, where $x$ is a network property of interest.  In contrast to generative models, however, a descriptive model does not specify how to sample networks from $\mathbb{P}_\theta$, which is often a challenging task. In simple cases, a network model can be represented as both generative and descriptive model. For example, the Erd\H{o}s--R\'{e}nyi random graph $G(n,p)$ can be defined, as above, by a generative algorithm, or by the formula for the probability distribution $\mathbb{P}(G)=p^{f_1(G)}(1-p)^{{n\choose 2}-f_1(G)}$, where $f_1(G)$ is the number of edges in $G$.  In general, however, representing a generative model as descriptive (and vice versa) is a very difficult problem whose solution could be very useful for applications.

Exponential random graphs (ERGs)  \cite{HoLe81,FS86,strauss1986,PaNe04,Ko09,snijders2002markov}, often called $p^*$ models in the social network research community \cite{WaPa96,AnWaCr99,RoPaKaLu07}, are among the most popular and best studied descriptive models which provide a conceptual framework for statistical modeling of network data. Let $\mathcal{G}_n$ be the set of all simple graphs (without self-loops or multi-edges) with $n$ nodes, $x_1,\ldots, x_r$ be functions on $\mathcal{G}_n$, henceforth referred to as the graph observables, and let $\bar{x}_1,\ldots, \bar{x}_r$ be  the values of these observables $x_1(\bar{G}),\ldots,x_r(\bar{G})$ for a network of interest $\bar{G}\in\mathcal{G}_n$ computed from available network data. The ERG model defined by $\bar{G}$ and its observables $\bar{x}_1,\ldots, \bar{x}_r$ is the exact analog of the Boltzmann distribution in statistical mechanics:
\begin{equation}\label{eq:ERG}
\mathbb{P}_\theta(G)=\frac{e^{-H_\theta(G)}}{Z(\theta)},
\hspace{5mm} H_\theta(G)=\sum_{i=1}^r\theta_i x_i(G),
\end{equation}
where $H_\theta(G)$ is called the graph Hamiltonian, $Z(\theta)$ the partition function (the normalization constant), and $\theta=(\theta_1,\ldots,\theta_r)$ is a vector of model parameters which satisfy
\begin{equation}\label{eq:parameters}
-\frac{\partial\ln Z}{\partial\theta_i}=\bar{x}_i.
\end{equation}

Whereas originally (\ref{eq:ERG}) was simply postulated and used in empirical studies~\cite{HoLe81}, it was later recognized~\cite{strauss1986,FS86,PaNe04} that ERGs are maximum-entropy ensembles. Namely, the distribution defined by (\ref{eq:ERG}) and (\ref{eq:parameters}) maximizes the Gibbs entropy
\begin{equation}\label{eq:entropy}
S(\mathbb{P})=-\sum_{G\in\mathcal{G}_n}\mathbb{P}(G)\ln\mathbb{P}(G),
\end{equation}
subject to the $r$ ``soft''  constraints  and the normalization condition
 \begin{align}
 &\mathbb{E}_\mathbb{P}[x_i]=\sum_{G\in\mathcal{G}_n}x_i(G)\mathbb{P}(G)=\bar{x}_i, \label{eq:constraints1}\\
 &\sum_{G\in\mathcal{G}_n}\mathbb{P}(G)=1. \label{eq:constraints2}
 \end{align}
The general principle of maximum entropy is thoroughly reviewed in~\cite{Presse13}. In the context of complex networks, the principle of maximum entropy and different entropy measures are discussed in~\cite{AnBi09}. 
Despite some known problems with ERGs with nonlinearly correlated constraints~\cite{shalizi13,chatterjee13,HoCzTo14}, ERGs remain one of the most popular descriptive models for network data analysis, especially in social science.

In many cases, however, representing a complex system with a simplicial complex  --- a higher-dimensional analog of a graph --- is conceptually more sound than the basic network representation, and provides a ``higher order approximation'' of the system. Consider for example a social system of scientific collaboration. Three researchers may co-author a single article or they may have three different papers with two authors each. The network representation, where nodes are connected if the corresponding scientists co-authored a paper, will not distinguish between these two cases. But we can do this by placing (in the former case), or not (in the latter case), a 2-simplex on the three nodes. This is illustrated in Fig~\ref{fig1}. Other examples, where the simplicial complex representation is more accurate include biological protein-interaction systems, where proteins form protein complexes often consisting of more than two proteins, economic systems of financial transactions often involving several  parties, and social systems, where groups of people are united by a common motive, interest, or goal, as opposed to merely being pairwise connected.

 \begin{figure} \label{fig1}
 	\centerline{\includegraphics[width=100mm]{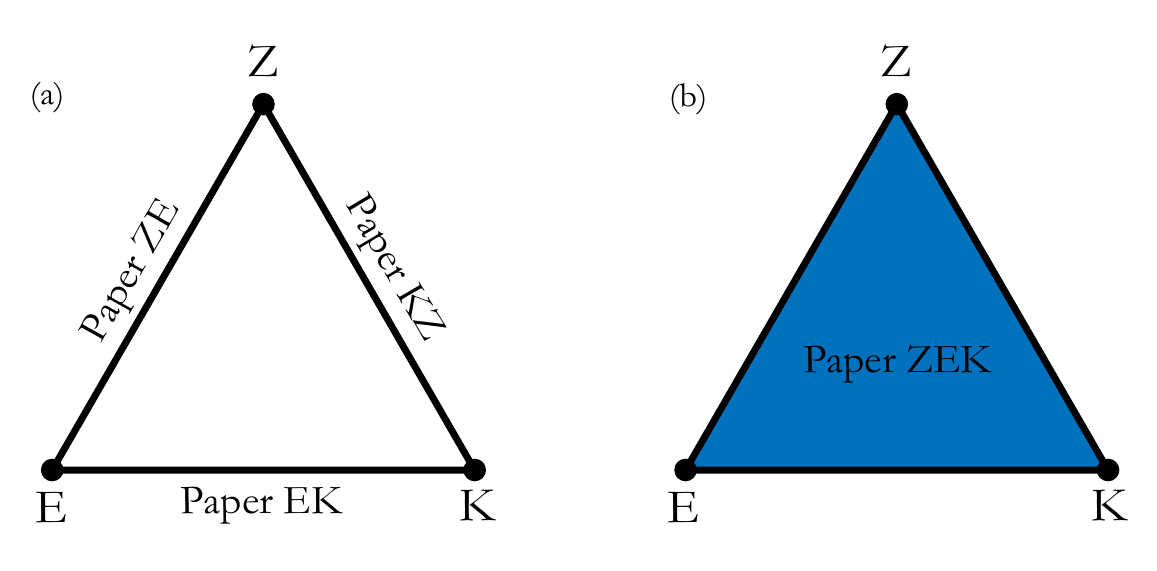}}
 	\caption{\small \textbf{Networks vs simplicial complexes.} The outcome of collaboration between scientists Z, E, and K could be three different papers co-authored by Z and E, E and K, and K and Z (Panel~(a)), or  a single paper co-authored by all three scientists (Panel~(b)). While the network representation does not distinguish between these two case and results in the graph in Panel~(a), the simplicial complex representation does by adding in the latter case the triangle $\{Z,E,K\}$ in Panel~(b). }
 \end{figure}

In general, compared to graphs, simplicial complexes encode more relevant information about a complex system, and make possible modeling beyond dyadic interactions. They have been used in many applications, including modeling social aggregation \cite{KeSpStMa13}, agent interaction \cite{Stasi14}, opinion formation and  dynamics \cite{MaRa09,MaRa14}, coverage and hole-detection in sensor networks \cite{Ghrist05}, and broadcasting in wireless networks \cite{Ren13}, to name just a few. We remark that prior to their being used for studying complex interactions, simplicial complexes were used in a rich variety of  geometric problems, ranging from grid generation in finite element analysis to modeling configuration spaces of dynamical systems \cite{Ed94}. Further details and applications can be found in \cite{EdHa10}.

In this paper, we introduce \textit{exponential random simplicial complexes} (ERSCs) which are higher dimensional generalizations of exponential random graphs, develop the formalism for ERSCs, and show that several popular generative models of random simplicial complexes  --- random flag complexes \cite{Kahle09}, Linial--Meshulam complexes \cite{LiMe06}, and Kahle's multi-parameter model \cite{Kahle14} --- can all be explicitly represented as ERSCs. We also introduce the most general ensemble of random simplicial complexes $\D$ with statistically independent simplices, and show that this ensemble is an ERSC ensemble as well.

\section{Basic Definitions and Notations}\label{notation}

Here we recall a few basic definitions and introduce  notation that we use throughout the paper.  For a comprehensive reference on simplicial complexes the reader is referred to \cite{Mu93}.

A \textit{simplicial complex} $C$ on $n$ vertices  $V=\{1,\ldots,n\}$ is a collection of non-empty subsets of $V$, called simplices. Complex $C$ contains all vertices, $\{i\}\in C$, and is closed under the subset relation: if $\sigma\in C$ and $\tau\subset\sigma$, then $\tau\in C$, where $\tau$ is called a \textit{face} of simplex $\sigma$, and $\sigma$ is a \textit{coface} of $\tau$. A simplex $\sigma$ is called a \textit{$k$-simplex} of dimension $k$ if its cardinality is $|\sigma|=k+1$.  It is useful to think of a $k$-simplex as the convex hull of $(k+1)$ points in general position in $\mathbb{R}^K$, $K\geq k$ \cite{Ha2002}. For instance, $0$-, $1$-, $2$-, and $3$-simplices are, respectively, vertices, edges, triangles, and tetrahedra. A simplicial complex is then a collection of simplices of different dimensions properly glued together. We say that $C$ has dimension $m$ if it has at least one $m$-simplex, but does not have simplices of higher dimension. Clearly, $m\leqslant n-1$.

\begin{figure} \label{fig2}
	\centerline{\includegraphics[width=80mm]{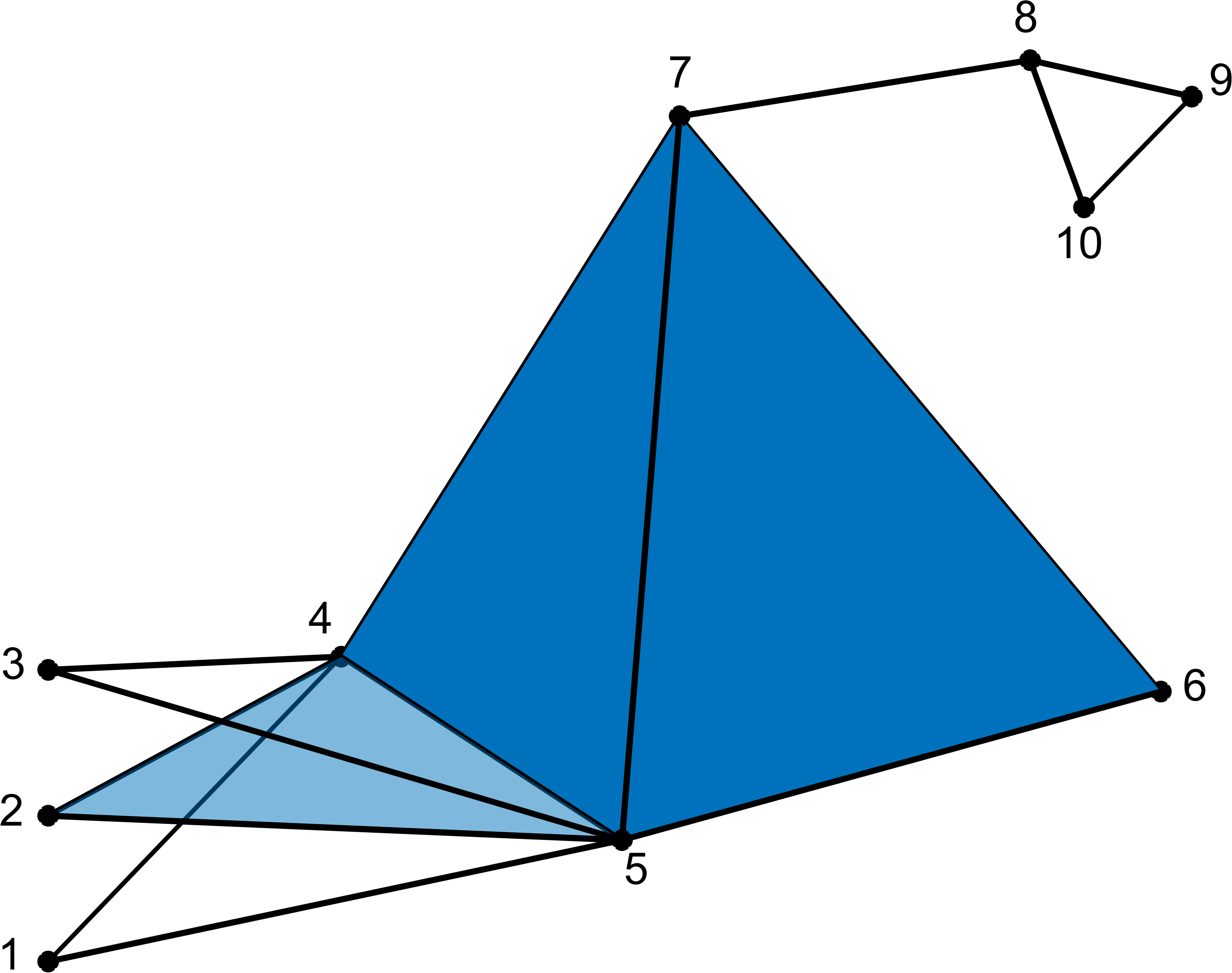}}
	\caption{\small \textbf{Simplicial complex and its adjacency tensors.} In this example, $C\in\mathcal{C}_{10}$, $\dim C=3$, and the non-zero elements $a_{\i_d}$ of adjacency tensor $\a_d$, $d=1,2,3,4,$ are: $a_{\i_1}=1$  for all  $\i_1=1,\ldots,10$; $a_{\i_2}=1$  for  $\i_2=(1,4), (1,5), (2,4), (2,5), (3,4), (3,5), (4,5), (4,6), (4,7), (5,6), (5,7)$, $(6,7), (7,8), (8,9), (8,10)$, and $(9,10)$; $a_{\i_3}=1$ for $\i_3=(2,4,5), (4,5,6), (4,5,7), (4,6,7)$, and $(5,6,7)$; $a_{\i_4}=1$ only for $\i_4=(4,5,6,7)$. The edge $\{4,6\}$ is not visible because of the 3-simplex $\{4,5,6,7\}$.}
\end{figure}

Let $\mathcal{C}_n$ be the set of all simplicial complexes on $n$ vertices. By analogy with graphs, where there exists a one-to-one correspondence between $\mathcal{G}_n$ and the set all boolean symmetric $n$-by-$n$ matrices with zeros on the diagonal, known as adjacency matrices, we can represent $\mathcal{C}_n$ by a tensor product
\begin{equation}
\mathcal{C}_n=\bigotimes_{d=1}^{n} \a_{d},
\end{equation}
where $\a_d=\{a_{i_1,\ldots,i_d}\}$, $i_j=1,\ldots,n$, $j=1,\ldots,d$, is a boolean symmetric tensor of order $d$ with zeros on all its diagonals. These conditions require precisely that $a_{i_1,\ldots,i_d}=a_{i_{\kappa(1)},\ldots,i_{\kappa(d)}}$ for any permutation $\kappa$ of subsubindices $1,\ldots,d$, and $a_{i_1,\ldots,i_d}=0$ if $i_j=i_k$ for any pair of $j$ and $k$. The non-redundant elements of tensor $\a_d$ are thus $a_{\i_d}$, where  multi-index $\i_d$ denotes a $d$-tuple of indices with increasing values:
 \begin{align}
 &\i_d=i_1,\ldots,i_d,\\
 &1\leqslant i_1<\ldots<i_d\leqslant n.
 \end{align}
The only requirement for $\bigotimes_{d=1}^{n} \a_{d}$ to be in bijection with $\mathcal{C}_n$ is then the following compatibility condition:
\begin{align}\label{condition}
a_{\i_d}&=1 \hspace{2mm}\Rightarrow \hspace{2mm} b_{\i_d}\stackrel{\tiny{\mathrm{def}}}{=}\prod_{k=1}^d a_{\i_d^{\hat{k}}}=1,\quad\text{where}\\
\i_d^{\hat{k}} &= i_1, \ldots ,\widehat{i_k}, \ldots ,i_d
\end{align}
is the $(d-1)$-long multi-index obtained from multi-index $\i_d$ by omitting index $i_k$. It is useful to think of $\i_d^{\hat{k}}$ as the result of operation $(\cdot)^{\hat{k}}$, which is the deletion of the $k^{\mathrm{th}}$ index, applied to multi-index $\i_d$.  Condition (\ref{condition}) simply formalizes the requirement that if the complex contains simplex $\{\i_d\}$, then it also contains all its faces.

For a simplicial  complex $C\in\mathcal{C}_n$,  $\a_d=\{a_{\i_d}\}$ is thus its ``adjacency'' tensor that  encodes the presence of $(d-1)$-simplices: $a_{\i_d}=1$ if  $\{\i_d\}\in C$, and zero otherwise. Since we assume that $C$ has $n$ vertices, we trivially have $\a_1={\bf 1}_n=(1,\ldots,1)$. Figure~\ref{fig2} illustrates the correspondence between simplicial complexes and their adjacency tensors.

A subcomplex of $C$ is a subset $C'\subset C$ that is also a simplicial complex. The \textit{$d$-skeleton} of $C$, denoted $C^{(d)}$, is a subcomplex consisting of all $k$-simplices of $C$ with $k\leq d$. The $1$-skeleton of a simplicial complex, for example, is a graph.
\begin{definition}
	The filled $d$-skeleton, denoted $C^{[d+1]}$, is a simplicial complex
	\begin{equation}
	C^{[d+1]}=C^{(d)}\cup\left\{\{\i_{d+2}\}: b_{\i_{d+2}}=1 \right\}.
	\end{equation}
	
\end{definition}

 In other words, $C^{[d+1]}$ is obtained from $C^{(d)}$ by adding $(d+1)$-simplices as follows. For every  $(d+1)$-simplex $\{\i_{d+2}\}$, if $C^{(d)}$ contains all $(d+2)$ $d$-simplices $\{\i_{d+2}^{\hat{k}}\}$, $k=1,\ldots,d+2$, we add  $\{\i_{d+2}\}$ to $C^{(d)}$. Intuitively,  we add $\{\i_{d+2}\}$ if its $d$-dimensional boundary  is already in $C^{(d)}$. Note that in this case we add $\{\i_{d+2}\}$ even if $\{\i_{d+2}\}\notin C$, and, therefore, $C^{[d+1]}$ is not necessarily a subcomplex of $C$. For example, $C^{[1]}$ is a complete graph on $n$ vertices, and $C^{[2]}$ is the $1$-skeleton of $C$ with all its triangular subgraphs filled by $2$-simplices.  We denote the filled $d$-skeleton by $C^{[d+1]}$ (instead of $C^{[d]}$), to emphasize that generally it has dimension $(d+1)$. Figure~\ref{fig3} illustrates the construction of a filled skeleton.
 \begin{figure}
    \centerline{\includegraphics[width=140mm]{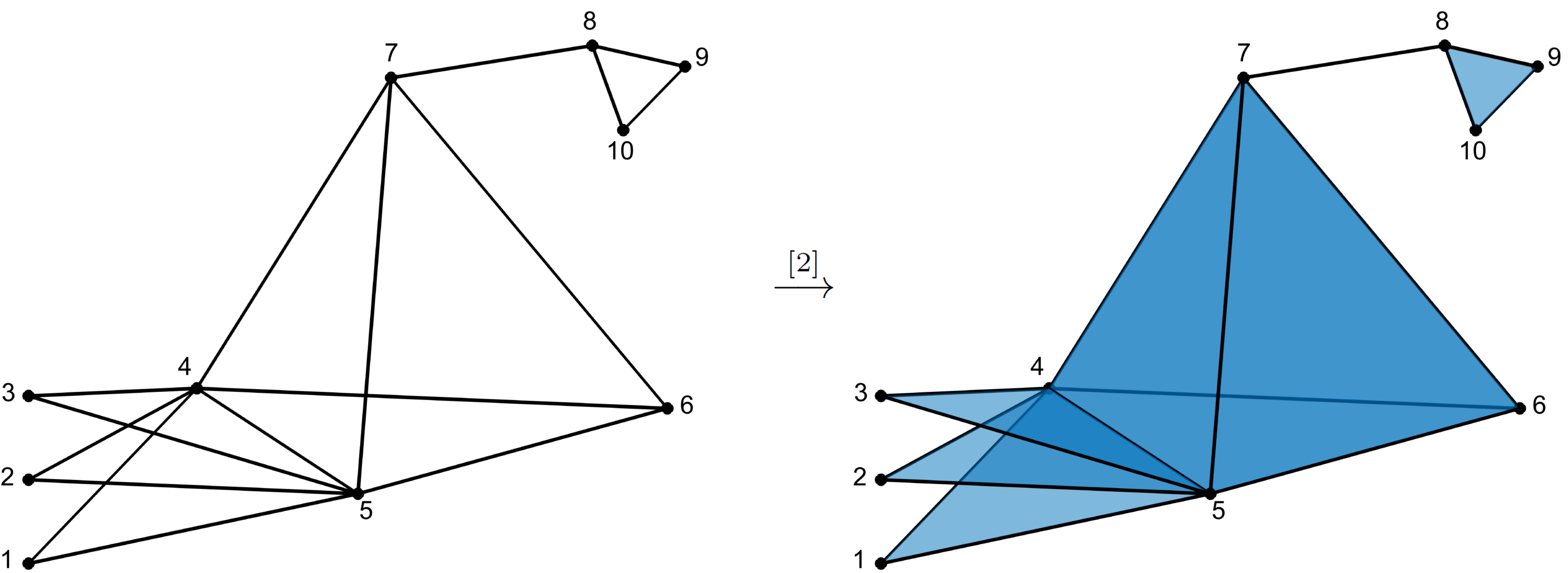}}
 	\caption{\small \textbf{Empty and filled skeletons.} The left simplicial complex is the $1$-skeleton $C^{(1)}$ of the complex $C$ in Fig.~\ref{fig2}. The filled $1$-skeleton $C^{[2]}$ on the right is obtained by adding all $2$-simplices based on triangular subgraphs of $C^{(1)}$. The 3-simplex $\{4,5,6,7\}\in C$ does not belong to $C^{[2]}$. The filled skeleton $C^{[2]}$ is not a subcomplex of $C$ since, for example, $\{8,9,10\}\notin C$. The filling operation is denoted by $\stackrel{{\footnotesize [2]}}{\longrightarrow}$.}
 	\label{fig3}
 \end{figure}

Thus, we have the following hierarchy of ``empty'' and ``filled'' skeletons:
\vspace{5pt} \\ \centerline{\includegraphics{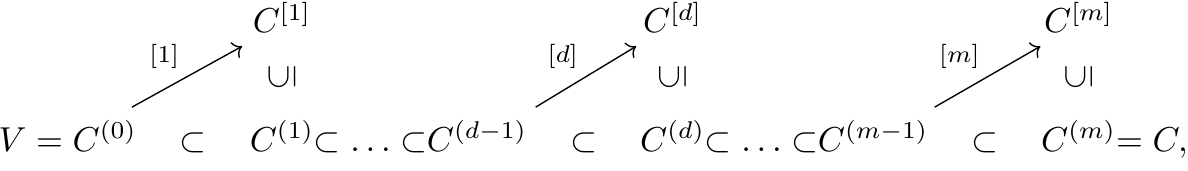}}
where $\stackrel{{\footnotesize [d]}}{\longrightarrow} $ denotes the filling operation.  Let $f_d$ denote the number of $d$-simplices in $C^{(d)}$ (and therefore in $C$), and $\phi_d$ be the number of $d$-simplices in $C^{[d]}$.
By construction, $\phi_d\geqslant f_{d}$, and
\begin{equation}
f_d=\sum_{\i_{d+1}}a_{\i_{d+1}} \hspace{3mm} \mbox{and} \hspace{3mm} \phi_d=\sum_{\i_{d+1}}b_{\i_{d+1}}.
\end{equation}
Figure~\ref{fig6} shows all simplicial complexes $C\in\mathcal{C}_3$ and the values of $f_1$, $f_2$, and $\phi_2$ for each $C$.

\section{Exponential Random Simplicial Complexes}

Let $\mathcal{S}$ be any subset of $\mathcal{C}_n$,  $\{x_1,\ldots,x_r\}$ be a set of functions on $\mathcal{S}$, $x_i:\mathcal{S}\rightarrow\mathbb{R}$, and $\{\bar{x}_1,\ldots,\bar{x}_r\}$ be a set of numbers, $\bar{x}_i\in\mathbb{R}$. We define the exponential random simplicial complex (ERSC) as a  maximum-entropy ensemble of complexes with ``soft'' constraints that require the observables $x_i$ to have the expected values $\bar{x}_i$ in the ensemble.

\begin{definition} An exponential random simplicial complex ERSC$(\mathcal{S},\{x_i\},\{\bar{x}_i\})$ is a pair $(\mathcal{S},\mathbb{P})$, where $\mathbb{P}$ is a probability distribution on $\mathcal{S}$ that maximizes the entropy
	\begin{equation}\label{eq:entropyC}
	S(\mathbb{P})=-\sum_{C\in\mathcal{S}}\mathbb{P}(C)\ln\mathbb{P}(C)\rightarrow \mbox{max},
	\end{equation}
	subject to the following constraints
	\begin{align}
	&\mathbb{E}_\mathbb{P}[x_i]=\sum_{C\in\mathcal{S}}x_i(C)\mathbb{P}(C)=\bar{x}_i \label{eq:constraints1C},\\
	&\sum_{C\in\mathcal{S}}\mathbb{P}(C)=1. \label{eq:constraints2C}
	\end{align}\label{def2}
\end{definition}

An exponential random simplicial complex is thus a descriptive model for random simplicial complexes. Generative models have been recently introduced and analyzed in~\cite{wu2015emergent,bianconi2015complex,bianconi2015complex2}.

 We can define ERSC for any  set of simplicial complexes, but, for most of the paper, we restrict ourselves to $\mathcal{C}_n$ and its subsets. If we use $\mathcal{S}=\mathcal{G}_n\subset\mathcal{C}_n$, then we recover the definition of ERGs. As with ERGs, the solution of the constrained optimization problem (\ref{eq:entropyC})-(\ref{eq:constraints2C}) belongs to the exponential family, hence the name of the ensemble.
\begin{theorem}\label{thm1} The maximum-entropy distribution $\mathbb{P}$ defined by (\ref{eq:entropyC})-(\ref{eq:constraints2C}) can be written as follows
\begin{equation}\label{eq:ERSC}
\mathbb{P}(C)=\frac{e^{-H(C)}}{Z(\theta)},
\hspace{5mm} H(C)=\sum_{i=1}^r\theta_i x_i(C), \hspace{5mm} Z(\theta)=\sum_{C\in\mathcal{S}}e^{-H(C)},
\end{equation}
where $H(C)$ is the Hamiltonian of simplicial complex $C\in\mathcal{S}$, $Z(\theta)$ is the normalizing constant, called the partition function, and $\theta=(\theta_1,\ldots,\theta_r)$ are the parameters satisfying the following system of $r$ equations
\begin{equation}\label{eq:parametersC}
-\frac{\partial\ln Z}{\partial\theta_i}=\bar{x}_i.
\end{equation}
\end{theorem}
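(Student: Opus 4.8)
The plan is to treat this as a finite-dimensional constrained optimization over the probability simplex and solve it by Lagrange multipliers, then confirm that the resulting stationary point is a global maximum by concavity. First I would form the Lagrangian, attaching a multiplier $\theta_i$ to each expectation constraint (\ref{eq:constraints1C}) and a multiplier (written as $\alpha-1$ for later convenience) to the normalization constraint (\ref{eq:constraints2C}):
\begin{equation*}
\mathcal{L}=-\sum_{C\in\mathcal{S}}\mathbb{P}(C)\ln\mathbb{P}(C)-\sum_{i=1}^r\theta_i\Bigl(\sum_{C\in\mathcal{S}}x_i(C)\mathbb{P}(C)-\bar{x}_i\Bigr)-(\alpha-1)\Bigl(\sum_{C\in\mathcal{S}}\mathbb{P}(C)-1\Bigr),
\end{equation*}
where the free variables are the values $\mathbb{P}(C)$, one per complex $C\in\mathcal{S}$.

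Next I would impose stationarity, $\partial\mathcal{L}/\partial\mathbb{P}(C)=0$ for every $C$. Differentiating yields $-\ln\mathbb{P}(C)-1-\sum_i\theta_i x_i(C)-(\alpha-1)=0$, hence
\begin{equation*}
\mathbb{P}(C)=e^{-\alpha}\,e^{-\sum_{i=1}^r\theta_i x_i(C)}=e^{-\alpha}\,e^{-H(C)},
\end{equation*}
which already exhibits the exponential form of (\ref{eq:ERSC}) with $H(C)=\sum_i\theta_i x_i(C)$. Imposing (\ref{eq:constraints2C}) then fixes $\alpha$ via $e^{\alpha}=\sum_{C\in\mathcal{S}}e^{-H(C)}=Z(\theta)$, giving $\mathbb{P}(C)=e^{-H(C)}/Z(\theta)$. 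I would note that this solution is automatically strictly positive, so the nonnegativity constraints $\mathbb{P}(C)\geq 0$ are never active and the unconstrained stationarity condition is the correct one.

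To recover the parameter equations (\ref{eq:parametersC}) I would differentiate $\ln Z$ directly: since $\partial Z/\partial\theta_i=-\sum_C x_i(C)e^{-H(C)}$, one gets $\partial\ln Z/\partial\theta_i=-\sum_C x_i(C)\mathbb{P}(C)=-\mathbb{E}_{\mathbb{P}}[x_i]$, and imposing (\ref{eq:constraints1C}) gives $-\partial\ln Z/\partial\theta_i=\bar{x}_i$. To certify that the stationary point is the global maximum rather than a saddle, I would invoke the strict concavity of $S(\mathbb{P})$ on the simplex together with the linearity of all constraints: a strictly concave objective on a convex feasible set has a unique global maximizer, which must be the stationary point found above.

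The step I expect to require the most care is not the algebra but the \emph{existence} of multipliers $\theta=(\theta_1,\ldots,\theta_r)$ for which (\ref{eq:parametersC}) is solvable at the prescribed values $\bar{x}_i$; such $\theta$ exist precisely when $(\bar{x}_1,\ldots,\bar{x}_r)$ lies in the interior of the convex hull of the attainable vectors $\{(x_1(C),\ldots,x_r(C)):C\in\mathcal{S}\}$, and under this standing feasibility assumption the concavity argument closes the proof. An alternative route that bypasses the Lagrangian and directly proves optimality is the Gibbs inequality: for any competing $\mathbb{Q}$ obeying the same constraints, nonnegativity of the relative entropy $\sum_C\mathbb{Q}(C)\ln[\mathbb{Q}(C)/\mathbb{P}(C)]\geq 0$ expands, using $\ln\mathbb{P}(C)=-H(C)-\ln Z$ and the shared constraint values, into $S(\mathbb{Q})\leq\sum_i\theta_i\bar{x}_i+\ln Z=S(\mathbb{P})$, with equality iff $\mathbb{Q}=\mathbb{P}$.
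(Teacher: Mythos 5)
Your proof is correct and follows essentially the same Lagrange-multiplier route as the paper's own proof of Theorem~\ref{thm1}: the same stationarity computation yielding $\mathbb{P}(C)\propto e^{-\sum_i\theta_i x_i(C)}$, and the same recovery of (\ref{eq:parametersC}) by differentiating $\ln Z$. The additional care you take over global optimality and the existence of multipliers goes beyond what the paper records here, and the Gibbs-inequality alternative you sketch at the end is precisely the argument the paper deploys later in its proof of Lemma~\ref{lemma2}.
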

The proof is nearly identical to the proof for ERGs \cite{PaNe04}, but we give it here for completeness.
\begin{proof} We use the standard method of Lagrange multipliers to solve the optimization problem (\ref{eq:entropyC})-(\ref{eq:constraints2C}).
Let $\theta_1,\ldots,\theta_r$ and $\alpha$ be the  Lagrange multipliers  for the constraints in (\ref{eq:constraints1C}) and (\ref{eq:constraints2C}). The Lagrangian is then
\begin{equation}
\mathcal{L}=-\sum_{C\in\mathcal{S}}\mathbb{P}(C)\ln\mathbb{P}(C)+\sum_{i=1}^r\theta_i\left(\bar{x}_i-\sum_{C\in\mathcal{S}}x_i(C)\mathbb{P}(C)\right)+\alpha\left(1-\sum_{C\in\mathcal{S}}\mathbb{P}(C)\right).
\end{equation}
The maximum entropy is achieved if the distribution $\mathbb{P}$ satisfies $\frac{\partial\mathcal{L}}{\partial\mathbb{P}(C)}=0$ for any $C\in\mathcal{S}$. This gives
\begin{equation}
-\ln\mathbb{P}(C)-1-\sum_{i=1}^r\theta_ix_i(C)-\alpha=0,
\end{equation}
or,
\begin{equation}
\mathbb{P}(C)\propto \exp\left({-\sum_{i=1}^r\theta_ix_i(C)}\right),
\end{equation}
which is equivalent to (\ref{eq:ERSC}), since $\sum_{C\in\mathcal{S}}\mathbb{P}(C)=1$. It remains to check that (\ref{eq:parametersC}) indeed holds:
\begin{equation}
\begin{split}
-\frac{\partial\ln Z}{\partial\theta_i}=&-\frac{1}{Z}\frac{\partial}{\partial\theta_i} \sum_{C\in\mathcal{S}}e^{-H(C)}=\frac{1}{Z}\sum_{C\in\mathcal{S}}\frac{\partial H(C)}{\partial \theta_i}e^{-H(C)}\\
=&\frac{1}{Z}\sum_{C\in\mathcal{S}}x_i(C)e^{-H(C)}=\sum_{C\in\mathcal{S}}x_i(C)\mathbb{P}(C)=\bar{x}_i,
\end{split}
\end{equation}
since the expected value of the observable $x_i$ in the ensemble is $\bar{x}_i$.
\end{proof}

\section{Simple Examples of ERSCs}\label{sec:simple examples}

Here we illustrate ERSCs with three simple examples: Erd\H{o}s--R\'{e}nyi random graphs $G(n,p)$, random flag complexes $X(n,p)$ and Linial--Meshulam random complexes $Y(n,p)$.

\subsection{Erd\H{o}s--R\'{e}nyi Random Graphs}

Perhaps the simplest nontrivial example of an ERSC is the Erd\H{o}s--R\'{e}nyi random graph ensemble $G(n,p)$, which can be viewed as a generative model for  $1$-dimensional simplicial complexes. $G(n,p)$ is a maximum-entropy ensemble with only one constraint that the expected number of edges $f_1$ in the ensemble is $ {n \choose 2}p$~\cite{PaNe04}:
\begin{equation}\label{eq:G=ERSC}
G(n,p)=\mathrm{ERSC}\left(\mathcal{G}_n,f_1,{n \choose 2}p\right).
\end{equation}

\subsection{Random Flag Complexes}

The flag complex $X(G)$ of a graph $G\in\mathcal{G}_n$, also called the clique complex or the Vietoris--Rips complex, is a (deterministic) simplicial complex in $\mathcal{C}_n$ whose $1$-skeleton is $G$ and whose $k$-simplices correspond to complete subgraphs of $G$, called cliques, of size $k+1$. Since any simplicial complex is homeomorphic to a flag complex, simplicial complexes arise in different applications and are often used for topological data analysis \cite{Zo12}.

Kahle \cite{Kahle09,Kahle14a} defines the random flag complex $X(n,p)$ as the flag complex of the Erd\H{o}s--R\'{e}nyi random graph,  $X(n,p)=X(G(n,p))$, and studies phase transitions of its homology groups. Here we show that $X(n,p)$ is, in fact, an ERSC.   \begin{proposition}\label{prop1}
Let $\mathcal{F}_n\subset\mathcal{C}_n$ be the set of all flag complexes on $n$ vertices, then
\begin{equation}\label{eq:X=ERSC}
X(n,p)=\mathrm{ERSC}\left(\mathcal{F}_n,f_1,{n \choose 2}p\right).
\end{equation}
\end{proposition}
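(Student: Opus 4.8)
The plan is to reduce the statement to the Erd\H{o}s--R\'{e}nyi case already recorded in \eqref{eq:G=ERSC}. The crucial structural fact is that a flag complex is completely determined by its $1$-skeleton: the map $X:\mathcal{G}_n\to\mathcal{F}_n$ sending a graph $G$ to its flag complex $X(G)$ is a bijection, with inverse $C\mapsto C^{(1)}$, since by definition the higher simplices of $X(G)$ are forced by the cliques of $G$. Moreover this bijection preserves the number of edges, $f_1(X(G))=f_1(G)$, because $X(G)$ and $G$ share the same $1$-skeleton.

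First I would transport the maximum-entropy problem on $\mathcal{F}_n$ to $\mathcal{G}_n$ through this bijection. Any distribution $\mathbb{P}$ on $\mathcal{F}_n$ corresponds to the distribution $\tilde{\mathbb{P}}(G)=\mathbb{P}(X(G))$ on $\mathcal{G}_n$, and since the Gibbs entropy \eqref{eq:entropyC} depends only on the collection of probability values and not on the labels of the outcomes, $S(\mathbb{P})=S(\tilde{\mathbb{P}})$. Because $f_1$ is preserved, the constraint $\mathbb{E}_{\mathbb{P}}[f_1]={n\choose 2}p$ on $\mathcal{F}_n$ is equivalent to $\mathbb{E}_{\tilde{\mathbb{P}}}[f_1]={n\choose 2}p$ on $\mathcal{G}_n$. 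Hence maximizing entropy over $\mathcal{F}_n$ subject to the edge constraint is the same problem as maximizing entropy over $\mathcal{G}_n$ subject to the edge constraint, whose unique solution is $G(n,p)$ by \eqref{eq:G=ERSC}. Transporting this solution back through $X$ shows that $\mathrm{ERSC}(\mathcal{F}_n,f_1,{n\choose 2}p)$ is the flag complex of $G(n,p)$, which is precisely $X(n,p)$.

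As a concrete check using Theorem~\ref{thm1}, the partition function is $Z(\theta)=\sum_{C\in\mathcal{F}_n}e^{-\theta f_1(C)}=\sum_{G\in\mathcal{G}_n}e^{-\theta f_1(G)}=(1+e^{-\theta})^{{n\choose 2}}$, so the parameter equation \eqref{eq:parametersC} reads ${n\choose 2}\frac{e^{-\theta}}{1+e^{-\theta}}={n\choose 2}p$, giving $\theta=\ln\frac{1-p}{p}$ and $Z=(1-p)^{-{n\choose 2}}$. Substituting these into \eqref{eq:ERSC} recovers $\mathbb{P}(C)=p^{f_1(C)}(1-p)^{{n\choose 2}-f_1(C)}$, the pushforward of the $G(n,p)$ law, as expected.

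The only point requiring care is the claim that constraining the single observable $f_1$ suffices to pin down the whole distribution on $\mathcal{F}_n$, and this is exactly where the restriction to flag complexes is essential: over the full space $\mathcal{C}_n$ many complexes share the same edge count, so the edge constraint alone would leave the higher-dimensional structure undetermined, whereas on $\mathcal{F}_n$ the $1$-skeleton determines everything. I therefore expect the main (and essentially only) obstacle to be verifying the bijection $\mathcal{G}_n\cong\mathcal{F}_n$ and the invariance of $f_1$ under it, after which the result follows immediately from the Erd\H{o}s--R\'{e}nyi computation.
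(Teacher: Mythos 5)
Your argument is correct, and its main line is a genuinely different route from the paper's. The paper proves the proposition by brute force through Theorem~\ref{thm1}: it identifies $\mathcal{F}_n$ with $\mathbf{1}_n\otimes\a_2$, computes the partition function $Z(\theta_1)=\sum_{C\in\mathcal{F}_n}e^{-\theta_1 f_1(C)}=(1+e^{-\theta_1})^{\binom{n}{2}}$, solves \eqref{eq:parametersC} for $\theta_1=\ln\frac{1-p}{p}$, and checks that the resulting Gibbs distribution equals $\mathbb{P}_{X(n,p)}$ from \eqref{eq:P_X}. You instead transport the entire constrained optimization problem through the bijection $X:\mathcal{G}_n\to\mathcal{F}_n$, using that entropy is invariant under relabeling of outcomes and that $f_1$ is preserved, and then invoke \eqref{eq:G=ERSC} as a black box; the partition-function computation appears in your write-up only as a consistency check (and it agrees with the paper's, up to a sign typo in the paper's displayed $Z$). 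The two approaches are close relatives --- the paper's change of summation variable from $C\in\mathcal{F}_n$ to $\a_2$ is exactly your bijection in disguise --- but yours makes the logical dependence on the Erd\H{o}s--R\'enyi case explicit and avoids redoing the Lagrange-multiplier calculation, at the cost of needing uniqueness of the entropy maximizer (which holds by strict concavity, or by the KL argument in Lemma~\ref{lemma2}) so that the pushforward of the unique solution on $\mathcal{G}_n$ is the unique solution on $\mathcal{F}_n$. Your closing remark about why the restriction to $\mathcal{F}_n$ is essential --- that on $\mathcal{C}_n$ the edge count would not determine the higher-dimensional structure --- is a point the paper makes only implicitly, and it is worth stating.
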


Before giving the proof, we comment on what exactly  Proposition~\ref{prop1} states. $X(n,p)$ is a generative model of simplicial complexes: to generate $C\sim X(n,p)$, one first generates $G\sim G(n,p)$, and then sets $C=X(G)$. Let $\mathcal{S}_{X(n,p)}\subset\mathcal{F}_n$ denote the sample space of this random generative process, and $\mathbb{P}_{X(n,p)}$ be the resulting probability distribution on $\mathcal{S}_{X(n,p)}$. The random flag complex $X(n,p)$ can therefore be viewed as ensemble  $(\mathcal{S}_{X(n,p)},\mathbb{P}_{X(n,p)})$. Proposition~\ref{prop1} claims that $(\mathcal{S}_{X(n,p)},\mathbb{P}_{X(n,p)})$ is a maximum-entropy ensemble with $\mathcal{S}_{X(n,p)}=\mathcal{F}_n$ and a single constraint that the expected number of $1$-simplices is ${n\choose 2}p$. The proof is the same as for (\ref{eq:G=ERSC}), but we give it here for illustrative purposes.
\begin{proof}
First,  note that any flag complex $C\in\mathcal{F}_n$ can be generated by $X(n,p)$ with a non-zero probability:
\begin{equation}\label{eq:P_X}
\mathbb{P}_{X(n,p)}(C)=\mathbb{P}_{G(n,p)}(C^{(1)})=p^{f_1(C)}(1-p)^{{n\choose 2}-f_1(C)}.
\end{equation}
Therefore,  $\mathcal{S}_{X(n,p)}$ is indeed equal to $\mathcal{F}_n$. To prove (\ref{eq:X=ERSC}), we need to show that $\mathbb{P}_{X(n,p)}$ is in fact the ERSC probability distribution  (\ref{eq:ERSC}),(\ref{eq:parametersC}). Since every flag complex $C\in\mathcal{F}_n$ is completely defined by the adjacency matrix of its $1$-skeleton, $\mathcal{F}_n=\bigotimes_{d=1}^2\a_d=\mathbf{1}_n\bigotimes\a_2$. The partition function $Z$ can then be computed as follows:
\begin{equation}\label{eq:Z for X}
\begin{split}
Z(\theta_1)=&\sum_{C\in\mathcal{F}_n}e^{-H(C)}=\sum_{C\in\mathcal{F}_n}e^{-\theta_1f_1(C)}=\sum_{\a_2}e^{-\theta_1\sum\limits_{\i_2}a_{\i_2}}\\
=&\sum_{\a_2}\prod_{\i_2}e^{-\theta_1a_{\i_2}}=\prod_{\i_2}\sum_{a_{\i_2}=0}^1e^{-\theta_1a_{\i_2}}=(1+e^{\theta_1})^{{n\choose2}}.
\end{split}
\end{equation}
We can now  solve (\ref{eq:parametersC}) with $\bar{x}_1={n\choose2}p$ for the parameter $\theta_1$,
\begin{equation}\label{eq:theta1_X}
\theta_1=-\ln\frac{p}{1-p},
\end{equation}
and check that indeed
\begin{equation}
\mathbb{P}_{X(n,p)}(C)=\frac{e^{-\theta_1f_1(C)}}{Z(\theta_1)}.
\end{equation}
This completes the proof.
\end{proof}
Given (\ref{eq:G=ERSC}), the result in (\ref{eq:X=ERSC}) is intuitively expected since the random part of generating $C\sim X(n,p)$ is sampling the $1$-skeleton $C^{(1)}\sim G(n,p)$. The rest of the construction, $C=X(C^{(1)})$, is fully deterministic.

\subsection{Linial--Meshulam Random Complexes}

Another example of ERSC is a generative model $Y(n,p)$ for random $2$-complexes. To generate $Y\sim Y(n,p)$, we start with a complete graph on $n$ vertices, the $1$-skeleton of a future simplicial complex, and add each of the ${n\choose 3}$ possible triangle faces independently at random with probability $p$. Linial and Meshulam introduced this model in \cite{LiMe06} and studied its topological properties. In particular, they proved for $Y(n,p)$ a cohomological analog of the celebrated Erd\H{o}s--R\'{e}nyi theorem on connectivity of the Erd\H{o}s--R\'{e}nyi random graphs \cite{ErRe59}. The model $Y(n,p)$ can be readily generalized to higher dimensions: start with a full $d$-complex on $n$ vertices, $1\leq d\leq n-2$, and add each of the ${n \choose d+2}$ possible $(d+1)$-simplices independently at random with probability $p$. We denote this model by $Y_d(n,p)$. The original Linial--Meshulam random complex $Y(n,p)$ is then  $Y_1(n,p)$.

Let $\mathcal{C}_n^{(d+1)}\subset\mathcal{C}_n$ be a set of all simplicial complexes of dimension $(d+1)$ or less, and  $\mathcal{Y}_d\subset\mathcal{C}_n^{(d+1)}$ be a subset of  complexes with full $d$-skeleton. In other words,
\begin{equation}
\mathcal{Y}_d=\{C\in\mathcal{C}_n^{(d+1)}: C^{(k)}=C^{[k]}, k=1,\ldots,d\}.
\end{equation}
Since for any $C\in\mathcal{Y}_d$, the first $(d+1)$ adjacency tensors $\a_1,\ldots,\a_{d+1}$ are unit tensors with zero diagonals,  $\mathcal{Y}_d=\a_{d+2}$.
\begin{proposition}
	The Linial--Meshulam random complex $Y_d(n,p)$ is the ERSC ensemble:
	\begin{equation}\label{eq:Y=ERSC}
	Y_d(n,p)=\mathrm{ERSC}\left(\mathcal{Y}_d,f_{d+1},{n \choose d+2}p\right).
	\end{equation}
\end{proposition}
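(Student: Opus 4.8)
The plan is to follow the template of the proof of Proposition~\ref{prop1} almost verbatim, the only genuinely new ingredient being the verification that the compatibility condition~(\ref{condition}) imposes no restriction on the top adjacency tensor once the $d$-skeleton is full.

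First I would identify the sample space of the generative process. By construction, to draw a complex from $Y_d(n,p)$ one fixes the full $d$-skeleton and then includes each of the ${n\choose d+2}$ candidate $(d+1)$-simplices independently with probability $p$. Hence every $C\in\mathcal{Y}_d$ is produced with the positive probability
$$\mathbb{P}_{Y_d(n,p)}(C)=p^{f_{d+1}(C)}(1-p)^{{n\choose d+2}-f_{d+1}(C)},$$
so that the sample space of the generative process is exactly $\mathcal{Y}_d$, mirroring~(\ref{eq:P_X}). It then remains to show that this generative measure coincides with the maximum-entropy distribution~(\ref{eq:ERSC}),(\ref{eq:parametersC}).

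Next I would exploit the structural observation recorded just before the statement: since $\a_1,\ldots,\a_{d+1}$ are fixed unit tensors for every $C\in\mathcal{Y}_d$, the complex is determined entirely by $\a_{d+2}$, and $\mathcal{Y}_d=\a_{d+2}$. The key point is that for any multi-index $\i_{d+2}$ all of its $d$-dimensional faces $\{\i_{d+2}^{\hat{k}}\}$ are $d$-simplices, which are all present because the $d$-skeleton is complete; thus $b_{\i_{d+2}}=1$ automatically and~(\ref{condition}) never constrains $a_{\i_{d+2}}$. Consequently the entries of $\a_{d+2}$ may be chosen freely and independently, and the sum over $\mathcal{Y}_d$ factorizes. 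Writing $f_{d+1}(C)=\sum_{\i_{d+2}}a_{\i_{d+2}}$, the partition function becomes
$$Z(\theta)=\sum_{\a_{d+2}}\prod_{\i_{d+2}}e^{-\theta a_{\i_{d+2}}}=\prod_{\i_{d+2}}\sum_{a_{\i_{d+2}}=0}^1 e^{-\theta a_{\i_{d+2}}}=\left(1+e^{-\theta}\right)^{{n\choose d+2}},$$
in complete analogy with~(\ref{eq:Z for X}).

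Finally I would solve~(\ref{eq:parametersC}) with $\bar{x}_1={n\choose d+2}p$, which yields the same log-odds parameter $\theta=-\ln\frac{p}{1-p}$ as in~(\ref{eq:theta1_X}), and substitute it back into~(\ref{eq:ERSC}) to confirm that $e^{-\theta f_{d+1}(C)}/Z(\theta)$ reproduces the generative distribution displayed above. The algebra is routine once the factorization is in place; the only step demanding genuine care — and the one I regard as the crux — is the decoupling argument, namely checking that a complete lower skeleton renders the compatibility condition vacuous on $\a_{d+2}$, so that the ensemble really is a product of independent Bernoulli factors over the $(d+1)$-simplices rather than a correlated measure. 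Everything else is a dimensional relabeling of the Erd\H{o}s--R\'{e}nyi computation.
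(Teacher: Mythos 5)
Your proposal is correct and follows essentially the same route as the paper: identify the generative probability $p^{f_{d+1}(C)}(1-p)^{{n\choose d+2}-f_{d+1}(C)}$, use $\mathcal{Y}_d=\a_{d+2}$ to factorize the partition function into $\binom{n}{d+2}$ independent Bernoulli factors, and solve for $\theta=-\ln\frac{p}{1-p}$. The ``crux'' you single out --- that a full $d$-skeleton makes the compatibility condition vacuous on $\a_{d+2}$ --- is exactly the observation the paper records in the sentence immediately preceding the proposition, so nothing is missing.
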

 \begin{proof}
The proof is similar to that for random flag complexes.  Given $C\in\mathcal{Y}_d$, the probability that the complex has been generated by $Y_d(n,p)$ is
\begin{equation}
\label{eq:P_Y}
\mathbb{P}_{Y_d(n,p)}(C)=p^{f_{d+1}(C)}(1-p)^{{n\choose d+2}-f_{d+1}(C)}.
\end{equation}
We need to show that this is in fact the maximum-entropy distribution under the constraint $\mathbb{E}[f_{d+1}]={n \choose d+2}p$. The partition function:
\begin{equation}\label{eq:Z for Y}
\begin{split}
Z(\theta_1)=&\sum_{C\in\mathcal{Y}_d}e^{-H(C)}=\sum_{C\in\mathcal{Y}_d}e^{-\theta_1f_{d+1}(C)}=\sum_{\a_{d+2}}e^{-\theta_1\sum\limits_{\i_{d+2}}a_{\i_{d+2}}}\\
=&\sum_{\a_{d+2}}\prod_{\i_{d+2}}e^{-\theta_1a_{\i_{d+2}}}=\prod_{\i_{d+2}}\sum_{a_{\i_{d+2}}=0}^1e^{-\theta_1a_{\i_{d+2}}}=(1+e^{\theta_1})^{{n\choose d+2}}.
\end{split}
\end{equation}
The Lagrange multiplier is then $\theta_1=-\ln\frac{p}{1-p} $, and
\begin{equation}
\mathbb{P}_{Y_d(n,p)}(C)=\frac{e^{-\theta_1f_{d+1}(C)}}{Z(\theta_1)},\end{equation}
as claimed.
\end{proof}

This result is also expected, since the Linial--Meshulam random complex is a higher dimensional analog of the Erd\H{o}s--R\'{e}nyi random graph: sampling from $Y_d(n,p)$ is the same Bernoulli trials process as in $G(n,p)$, with the only difference being that now we are creating $(d+1)$-simplices instead of 1-simplices (edges).

\section{Any Distribution is Maximum-Entropy}\label{sec:maximum_entropy}

	
It is a well-known fact in statistics and information theory (\eg \cite{CoTho91}) that \textit{any} discrete distribution $\mathbb{P}^*$ is maximum-entropy under properly specified constraints. Specifically, if one can write $-\ln\mathbb{P}^*$ as a linear combination $\sum\lambda_ih_i^*+\xi$ of some functions $\{h_i^*\}$, then distribution $\mathbb{P}^*$ uniquely maximizes entropy $S(\mathbb{P})$ across all distributions $\mathbb{P}$ that satisfy constraints $\mathbb{E}_{\mathbb{P}}[h_i^*]=\mathbb{E}_{\mathbb{P}^*}[h_i^*]$. In this section, we briefly review this general result, and show how it applies to the already considered models $G(n,p)$, $X(n,p)$, and $Y_d(n,p)$, where $-\ln\mathbb{P}^*$ can be written as a linear combination. We will see in the next section that this result simplifies dramatically the proofs for more complicated ERSCs.

Let us consider a discrete probability space $(\Omega,\mathbb{P}^*)$, where $\Omega$ is a finite sample space and $\mathbb{P}^*$ is some fixed probability distribution on $\Omega$. Let us represent the distribution $\mathbb{P}^*$ in the ``Gibbs form'' as follows:
\begin{equation}
\mathbb{P}^*(\omega)=e^{-(-\ln\mathbb{P}^*(\omega))}=e^{-H^*(\omega)},
\end{equation}
where
\begin{equation}\label{eq:Hamiltonian}
H^*(\omega)=-\ln\mathbb{P}^*(\omega).
\end{equation}
Let $\bar{H}^*$ denote the expectation of the function  $H^*:\Omega\rightarrow\mathbb{R}$ with respect to $\mathbb{P}^*$, which is exactly the entropy of $\mathbb{P}^*$,
\begin{equation}\label{eq:entropy*}
\bar{H}^*=\mathbb{E}_{\mathbb{P}^*}[H^*]=\sum_{\omega\in\Omega}H^*(\omega)\mathbb{P}^*(\omega)=S(\mathbb{P}^*).
\end{equation}
\begin{lemma}\label{lemma1} The probability distribution $\mathbb{P}^*$ is the solution of the following
	optimization problem:
	\begin{equation}\label{eq:opt_prob_lemma1}
	S(\mathbb{P})=-\sum_{\omega\in\Omega}\mathbb{P}(\omega)\ln\mathbb{P}(\omega)\rightarrow \max,
	\end{equation}
	subject to the  constraints
	\begin{equation}\label{eq:opt_prob_lemma1_constr}
	\sum_{\omega\in\Omega}\mathbb{P}(\omega)=1 \hspace{3mm}\mbox{and} \hspace{3mm}
	\mathbb{E}_\mathbb{P}[H^*]\equiv-\sum_{\omega\in\Omega}\mathbb{P}(\omega)\ln\mathbb{P}^*(\omega)=\bar{H}^*.
	\end{equation}
\end{lemma}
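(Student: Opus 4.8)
The plan is to reduce the statement to the non-negativity of relative entropy (Gibbs' inequality), which delivers both optimality and uniqueness in one stroke. Let $\mathbb{P}$ be any distribution on $\Omega$ satisfying the two constraints in (\ref{eq:opt_prob_lemma1_constr}). First I would expand the Kullback--Leibler divergence of $\mathbb{P}$ from $\mathbb{P}^*$:
\begin{equation}
D(\mathbb{P}\,\|\,\mathbb{P}^*)=\sum_{\omega\in\Omega}\mathbb{P}(\omega)\ln\frac{\mathbb{P}(\omega)}{\mathbb{P}^*(\omega)}
=\sum_{\omega\in\Omega}\mathbb{P}(\omega)\ln\mathbb{P}(\omega)-\sum_{\omega\in\Omega}\mathbb{P}(\omega)\ln\mathbb{P}^*(\omega).
\end{equation}
The first sum is $-S(\mathbb{P})$ by the definition of entropy in (\ref{eq:opt_prob_lemma1}), and the second sum, including its sign, equals $\mathbb{E}_\mathbb{P}[H^*]$ since $H^*=-\ln\mathbb{P}^*$; by the second constraint this is $\bar{H}^*$. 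Because $\bar{H}^*=S(\mathbb{P}^*)$ by (\ref{eq:entropy*}), I obtain the key identity $D(\mathbb{P}\,\|\,\mathbb{P}^*)=S(\mathbb{P}^*)-S(\mathbb{P})$.

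Then I would invoke Gibbs' inequality: $D(\mathbb{P}\,\|\,\mathbb{P}^*)\geq 0$ for every pair of distributions, with equality if and only if $\mathbb{P}=\mathbb{P}^*$. Combined with the identity above, this gives $S(\mathbb{P})\leq S(\mathbb{P}^*)$ for every admissible $\mathbb{P}$, with equality only at $\mathbb{P}=\mathbb{P}^*$; hence $\mathbb{P}^*$ is the unique maximizer. One should also check that $\mathbb{P}^*$ is itself admissible, which is immediate: it is normalized, and $\mathbb{E}_{\mathbb{P}^*}[H^*]=\bar{H}^*$ holds by the very definition of $\bar{H}^*$ in (\ref{eq:entropy*}).

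This argument is short and carries essentially no obstacle, so the only points that need care are bookkeeping ones. The implicit standing assumption is that $\mathbb{P}^*$ has full support on $\Omega$ (otherwise $H^*=-\ln\mathbb{P}^*$ is not finite everywhere), which I would state explicitly; I would also restrict attention to distributions $\mathbb{P}$ whose support lies in that of $\mathbb{P}^*$, so that $\mathbb{E}_\mathbb{P}[H^*]$ is finite and the divergence is well defined. The one genuinely substantive ingredient is the equality case of Gibbs' inequality, which is what upgrades ``$\mathbb{P}^*$ is a maximizer'' to ``$\mathbb{P}^*$ is the unique maximizer''; it follows from the strict convexity of $t\mapsto t\ln t$ via Jensen's inequality.

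As an alternative, I note that Lemma~\ref{lemma1} is simply the special case of Theorem~\ref{thm1} with a single observable $x_1=H^*$ and target $\bar{x}_1=\bar{H}^*$: the Gibbs form (\ref{eq:ERSC}) reads $\mathbb{P}(\omega)=e^{-\theta_1 H^*(\omega)}/Z(\theta_1)$, and taking $\theta_1=1$ gives $Z(1)=\sum_{\omega\in\Omega}e^{-H^*(\omega)}=\sum_{\omega\in\Omega}\mathbb{P}^*(\omega)=1$ and $\mathbb{P}=\mathbb{P}^*$, while the parameter equation (\ref{eq:parametersC}) holds because $\mathbb{E}_{\mathbb{P}^*}[H^*]=\bar{H}^*$. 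I would nonetheless prefer the divergence argument, since the Lagrange-multiplier derivation behind Theorem~\ref{thm1} only locates a critical point and does not by itself establish uniqueness.
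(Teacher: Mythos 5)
Your proposal is correct and uses essentially the same argument as the paper: the paper proves Lemma~\ref{lemma1} as the special case $r=1$, $\lambda_1=1$, $\xi=0$ of Lemma~\ref{lemma2}, whose proof is exactly your Kullback--Leibler decomposition $S(\mathbb{P})=-D_{\mathrm{KL}}(\mathbb{P}\parallel\mathbb{P}^*)+\mathbb{E}_\mathbb{P}[H^*]$ together with non-negativity of the divergence and its equality case for uniqueness. Your added remarks on the support of $\mathbb{P}^*$ and on why the Lagrange-multiplier route of Theorem~\ref{thm1} would not by itself give uniqueness are sensible but do not change the substance.
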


In other words, Lemma~\ref{lemma1}  states that any discrete probability distribution is a maximum-entropy distribution. The entropy maximization is across all possible distributions $\mathbb{P}$ satisfying the constraint that the expected value of $H^*$ in distribution $\mathbb{P}$ is equal to $H^*$'s expected value in distribution $\mathbb{P}^*$, which is $\mathbb{P}^*$'s entropy. In what follows, we will need a more general version of Lemma~\ref{lemma1}.

Suppose that function $H^*$ in (\ref{eq:Hamiltonian}) can be written as a linear combination of $r$ other functions $h_i^*:\Omega\rightarrow\mathbb{R}$:
\begin{equation}\label{eq:linearHam}
H^*(\omega)=-\ln\mathbb{P}^*(\omega)=\sum_{i=1}^r \lambda_i h_i^*(\omega) + \xi,
\end{equation}
where $\lambda_i,\xi\in\mathbb{R}$ are constants.
Let $\bar{h}^*_i$ denote the expectation of $h^*_i$ with respect to $\mathbb{P}^*$,
\begin{equation}
\bar{h}^*_i=\mathbb{E}_{\mathbb{P}^*}[h^*_i]=\sum_{\omega\in\Omega}h_i^*(\omega)\mathbb{P}^*(\omega).
\end{equation}
\begin{lemma}(Th.\ 11.1.1~\cite{CoTho91})\label{lemma2}
	The probability distribution $\mathbb{P}^*$ is the solution of the following optimization problem:
	\begin{equation}\label{eq:opt_prob_lemma2}
	S(\mathbb{P})=-\sum_{\omega\in\Omega}\mathbb{P}(\omega)\ln\mathbb{P}(\omega)\rightarrow \max,
	\end{equation}
	subject to the constraints
	\begin{equation}\label{eq:opt_prob_lemma2_constr}
	\sum_{\omega\in\Omega}\mathbb{P}(\omega)=1 \hspace{3mm}\mbox{and}\hspace{3mm}
	\mathbb{E}_\mathbb{P}[h^*_i]\equiv\sum_{\omega\in\Omega}h_i^*(\omega)\mathbb{P}(\omega)=\bar{h}^*_i, \hspace{3mm}
	i=1,\ldots,r.
	\end{equation}
\end{lemma}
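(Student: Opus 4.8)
The plan is to establish the optimality of $\mathbb{P}^*$ through the Gibbs inequality, i.e.\ the non-negativity of the relative entropy, rather than through Lagrange multipliers. The Lagrange-multiplier route used in the proof of Theorem~\ref{thm1} would only locate a stationary point and would still require an appeal to the concavity of the entropy to conclude that this point is the global maximum; the relative-entropy argument delivers both optimality and uniqueness in one stroke, and it makes transparent exactly where the linearity hypothesis (\ref{eq:linearHam}) enters.

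First I would fix an arbitrary distribution $\mathbb{P}$ on $\Omega$ satisfying the normalization and the moment constraints (\ref{eq:opt_prob_lemma2_constr}), and form the Kullback--Leibler divergence
\begin{equation}
D(\mathbb{P}\,\|\,\mathbb{P}^*)=\sum_{\omega\in\Omega}\mathbb{P}(\omega)\ln\frac{\mathbb{P}(\omega)}{\mathbb{P}^*(\omega)}=-S(\mathbb{P})-\sum_{\omega\in\Omega}\mathbb{P}(\omega)\ln\mathbb{P}^*(\omega).
\end{equation}
The second step is to rewrite the cross-entropy term using the defining hypothesis (\ref{eq:linearHam}): since $-\ln\mathbb{P}^*(\omega)=\sum_{i}\lambda_i h_i^*(\omega)+\xi$ is a linear combination of the observables $h_i^*$, linearity of expectation gives
\begin{equation}
-\sum_{\omega\in\Omega}\mathbb{P}(\omega)\ln\mathbb{P}^*(\omega)=\sum_{i=1}^r\lambda_i\,\mathbb{E}_\mathbb{P}[h_i^*]+\xi.
\end{equation}
This is the crux of the argument: because the constraints fix $\mathbb{E}_\mathbb{P}[h_i^*]=\bar h_i^*$ for every $i$, the right-hand side equals $\sum_i\lambda_i\bar h_i^*+\xi$, a quantity that is the \emph{same} for every feasible $\mathbb{P}$. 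Evaluating this same identity at $\mathbb{P}=\mathbb{P}^*$ shows that this common value is precisely $S(\mathbb{P}^*)$, so the cross-entropy is constant on the feasible set and equals the target entropy.

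Combining the two displays yields $D(\mathbb{P}\,\|\,\mathbb{P}^*)=S(\mathbb{P}^*)-S(\mathbb{P})$ for every feasible $\mathbb{P}$. The final step is to invoke Gibbs' inequality, $D(\mathbb{P}\,\|\,\mathbb{P}^*)\geq 0$ with equality if and only if $\mathbb{P}=\mathbb{P}^*$, which immediately gives $S(\mathbb{P})\leq S(\mathbb{P}^*)$ and identifies $\mathbb{P}^*$ as the unique maximizer. I expect the only genuinely substantive point to be the observation in the second step---that linearity of $-\ln\mathbb{P}^*$ in the $h_i^*$ converts the moment constraints into the statement that the cross-entropy is pinned to a single value; everything else is the standard positivity of relative entropy. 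Lemma~\ref{lemma1} is then recovered as the special case $r=1$, $h_1^*=H^*$, $\lambda_1=1$, $\xi=0$.
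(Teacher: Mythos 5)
Your proposal is correct and follows essentially the same route as the paper's own proof: decompose $S(\mathbb{P})$ into $-D_{\mathrm{KL}}(\mathbb{P}\parallel\mathbb{P}^*)$ plus the cross-entropy, use the linearity hypothesis together with the moment constraints to pin the cross-entropy at $\sum_i\lambda_i\bar{h}_i^*+\xi=S(\mathbb{P}^*)$, and conclude via non-negativity of the KL divergence, with uniqueness from its equality condition. No substantive differences.
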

We note that the main utility of Lemma~\ref{lemma2} is not in observing that $\mathbb{P}^*(\omega)\propto e^{-\sum_{i=1}^r \lambda_i h_i^*(\omega)}$ is a maximum-entropy distribution (in fact, Lemma~\ref{lemma1} states that any distribution is), but in specifying more general \textit{constraints} (\ref{eq:opt_prob_lemma2_constr}) under which distribution $\mathbb{P}^*$ is maximum-entropy. Lemma~\ref{lemma1} is a special case of more general Lemma~\ref{lemma2} with $\xi=0, r=1$, and $\lambda_1=1$. Indeed, in this case $H^*=h_1^*$, and the constraints in (\ref{eq:opt_prob_lemma1_constr}) and (\ref{eq:opt_prob_lemma2_constr}) become manifestly identical. Lemma~\ref{lemma2} is identical to Theorem~11.1.1 in~\cite{CoTho91}, but we provide the proof here for completeness.

\begin{proof}
	Let $\mathbb{P}$ be any distribution that satisfies the constraints in (\ref{eq:opt_prob_lemma2_constr}). Then its entropy
	\begin{equation}
	\begin{split}
	 S(\mathbb{P})=&-\sum_{\omega\in\Omega}\mathbb{P}(\omega)\ln\mathbb{P}(\omega)=-\sum_{\omega\in\Omega}\mathbb{P}(\omega)\ln\frac{\mathbb{P}(\omega)\mathbb{P}^*(\omega)}{\mathbb{P}^*(\omega)}\\
	=&-D_{\mathrm{KL}}(\mathbb{P}\hspace{-0.5mm}\parallel\hspace{-0.5mm}\mathbb{P}^*)+\sum_{\omega\in\Omega}\mathbb{P}(\omega)H^*(\omega),
	\end{split}
	\end{equation}
	where $D_{\mathrm{KL}}(\mathbb{P}\hspace{-0.5mm}\parallel\hspace{-0.5mm}\mathbb{P}^*)$ is the Kullback--Leibler (KL) divergence of $\mathbb{P}$ from $\mathbb{P}^*$. Since the KL divergence is always non-negative,
	\begin{equation}
	\begin{split}
	S(\mathbb{P})\leq& \sum_{\omega\in\Omega}\mathbb{P}(\omega)H^*(\omega)=\sum_{\omega\in\Omega}\mathbb{P}(\omega)\left(\sum_{i=1}^r \lambda_i h_i^*(\omega) + \xi\right)\\
	=&\sum_{i=1}^r\lambda_i\mathbb{E}_{\mathbb{P}}[h_i^*]+\xi=
	\sum_{i=1}^r\lambda_i\bar{h}_i^*+\xi=S(\mathbb{P}^*).
	\end{split}
	\end{equation}
	This shows that $\mathbb{P}^*$ indeed maximizes the entropy. The uniqueness follows form the fact that $D_{\mathrm{KL}}(\mathbb{P}\hspace{-0.5mm}\parallel\hspace{-0.5mm}\mathbb{P}^*)=0$ if and only if $\mathbb{P}=\mathbb{P}^*$.
\end{proof}

Lemmas~\ref{lemma1}\&\ref{lemma2} can be formulated for any  ensemble of discrete ``objects,'' including sets of graphs and simplicial complexes. Using the notation introduced in Definition~\ref{def2} and applying the Lemmas to $\Omega=\mathcal{S}\subset\mathcal{C}_n$, we can concisely write Lemma~\ref{lemma1} as
\begin{equation}\label{eq:lemma1}
(\mathcal{S},\mathbb{P}^*)=\mathrm{ERSC}\left(\mathcal{S},H^*,\bar{H}^*\right),
\end{equation}
and Lemma~\ref{lemma2} as
\begin{equation}\label{eq:lemma2}
(\mathcal{S},\mathbb{P}^*)=\mathrm{ERSC}\left(\mathcal{S},\{h_i^*\},\{\bar{h}_i^*\}\right).
\end{equation}

In many cases, Lemmas~\ref{lemma1}\&\ref{lemma2} are not useful, since for many generative models $(\mathcal{S},\mathbb{P}^*)$ the distribution $\mathbb{P}^*$  cannot be explicitly written in the Gibbs form with linear Hamiltonian (\ref{eq:linearHam}). 
 Moreover, in generative models, the complexity of the generating algorithm often makes it impossible to  even explicitly  compute the probability $\mathbb{P}^*(C)$ for a given $C$. Even in the preferential attachment model \cite{BarAlb99}, where the algorithm which generates a network appears to be fairly simple --- a new node connects to existing node $i$ with probability proportional to its degree $p_i\propto k_i$ --- the resulting distribution is unknown.
However, if we do know $\mathbb{P}^*$ as a function of observables, $\mathbb{P}^*(C)\propto e^{-\sum_{i=1}^r \lambda_i h_i^*(C)}$, Lemma~\ref{lemma2} is very helpful in representing $(\mathcal{S},\mathbb{P}^*)$ as an ERSC.

Indeed, let us briefly see how Lemma~\ref{lemma2} applies to the already considered generative models. For $G(n,p)$, the probability of a graph $G\in\mathcal{G}_n$ in the model is
\begin{equation}
\mathbb{P}_{G(n,p)}=p^{f_1(G)}(1-p)^{{n\choose 2}-f_1(G)}.
\end{equation}
The corresponding Hamiltonian is then
\begin{equation}
\begin{split}
H_{G(n,p)}(G)=&-f_1(G)\ln p-\left({n\choose 2}-f_1(G)\right)\ln(1-p)\\
=&\underbrace{\ln\frac{1-p}{p}}_{\lambda_1}\underbrace{f_1(G)}_{h_1^*(G)}-\underbrace{{n\choose 2}\ln(1-p)}_{\xi},
\end{split}
\end{equation}
where the bottom notations refer to the notations in Lemma~\ref{lemma2}. The observation that $G(n,p)$ is an ERG (\ref{eq:G=ERSC}) then follows from Lemma~\ref{lemma2}, since $\mathbb{E}_{\mathbb{P}_{G(n,p)}}[f_1]={n\choose 2}p$. Similarly, for $X(n,p)$ and $Y_d(n,p)$,
\begin{equation}
\begin{split}
&H_{X(n,p)}(C)=\ln\frac{1-p}{p}f_1(C)-{n\choose 2}\ln(1-p), \\ &\bar{f}_{1}=\mathbb{E}_{{X(n,p)}}[f_1]={n\choose 2}p, \\
&H_{Y_d(n,p)}(C)=\ln\frac{1-p}{p}f_1(C)-{n\choose d+1}\ln(1-p), \\ &\bar{f}_{d+1}=\mathbb{E}_{{Y_d(n,p)}}[f_{d+1}]={n\choose d+2}p,
\end{split}
\end{equation}
and the observations (\ref{eq:X=ERSC}) and (\ref{eq:Y=ERSC}) that these ensembles are ERSCs are direct corollaries of Lemma~\ref{lemma2}.

The main point of this section is that in case the probability distribution is a known exponential function of a linear combination of structural observables, the computation of the partition function, which tends to be a nontrivial task in general, is not necessary  to show that the distribution is the unique maximizer of entropy across all the distributions that satisfy the constraints that the expected values of these observables are equal to their expected values in this distribution.

\section{Kahle's $\Delta$-Ensembles}\label{sec:Khale}
We now turn to a more general model which contains the Erd\H{o}s--R\'{e}nyi random graphs, the random flag complexes, and the Linial--Meshulam complexes as special cases. In a recent survey~\cite{Kahle14}, Kahle  introduced the following multi-parameter model $\Delta(n;p_1,\ldots,p_{n-1})$ which generates random simplicial complexes inductively by dimension. First, build a $1$-skeleton by putting an edge between any two vertices with probability $p_1$. Then, for $d=2,\ldots,n-1$, add every $d$-simplex with probability $p_d$, but only if the entire $(d-1)$-dimensional boundary of that simplex is already in place. More formally,  we have the following definition.
\begin{definition}
The Kahle model $\Delta(n;p_1,\ldots,p_{n-1})$ is a random simplicial complex model that generates $C\in\mathcal{C}_{ n}$ as follows: for $d=1,\ldots,n-1$, for every $\i_{d+1}$,
\begin{equation}
\begin{split}
\mbox{if } b_{\i_{d+1}}=0  \hspace{2mm}\Rightarrow \hspace{2mm}& \mbox{set } a_{\i_{d+1}}=0.\\
\mbox{if } b_{\i_{d+1}}=1  \hspace{2mm}\Rightarrow \hspace{2mm}& \mbox{set } a_{\i_{d+1}}=\begin{cases} 1 &\mbox{with probability } p_d,  \\
0 & \mbox{with probability } 1-p_d. \end{cases}
\end{split}
\end{equation}
\end{definition}
Topological properties of the Khale model are studies in \cite{CoFa1,CoFa2,CoFa3}. Here we study its maximum-entropy properties. In Appendix~\ref{sec:A1} we prove the following proposition.
\begin{proposition}\label{prop:3}
	Let $C\sim\Delta(n;p_1,\ldots,p_{n-1})$. The expected numbers of $d$-simplices in $C^{(d)}$ and $C^{[d]}$ are
	\begin{equation}
	\bar{f}_d={n\choose d+1}\prod_{k=1}^dp_k^{d+1\choose d-k} \hspace{3mm} \mbox{ and } \hspace{3mm} \bar{\phi}_d={n\choose d+1}\prod_{k=1}^{d-1}p_k^{d+1\choose d-k}.
	\end{equation}
\end{proposition}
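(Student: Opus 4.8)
The plan is to reduce both expectations to a single-simplex probability using linearity of expectation and the vertex-symmetry of the generative rule, and then to compute that probability by writing the presence of a simplex as a product of independent Bernoulli variables. Since $f_d=\sum_{\i_{d+1}}a_{\i_{d+1}}$ and $\phi_d=\sum_{\i_{d+1}}b_{\i_{d+1}}$, linearity gives $\bar f_d=\binom{n}{d+1}\,\mathbb{E}[a_{\i_{d+1}}]$ and $\bar\phi_d=\binom{n}{d+1}\,\mathbb{E}[b_{\i_{d+1}}]$, where by the invariance of the construction under relabeling of vertices the two expectations do not depend on the chosen $\i_{d+1}$. Hence it suffices to evaluate $\mathbb{P}(a_\sigma=1)$ and $\mathbb{P}(b_\sigma=1)$ for a single fixed $d$-simplex $\sigma$ on the $d+1$ vertices $\{v_1,\dots,v_{d+1}\}$.

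The technical heart is to recast the sequential process in terms of independent coins. For every potential $j$-simplex $\tau$ with $j\geq 1$, let $c_\tau$ be the independent $\mathrm{Bernoulli}(p_j)$ variable flipped for $\tau$ when its boundary is complete; these coins are mutually independent across all $\tau$. Directly from the definition we have $a_\tau=c_\tau\,b_\tau$ with $b_\tau=\prod_{k}a_{\tau^{\hat k}}$. I would then prove by induction on $\dim\sigma$ the closed form
\[
a_\sigma=\prod_{\substack{\tau\subseteq\sigma\\ |\tau|\geq 2}}c_\tau,
\]
i.e.\ $\sigma$ is present iff every coin attached to a face of $\sigma$ of dimension $\geq 1$ came up $1$. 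The base case $\dim\sigma=1$ is immediate since an edge's boundary consists of vertices, which are always present, so $a_\sigma=c_\sigma$. For the inductive step, apply the hypothesis to each facet $\sigma^{\hat k}$; the union of the face-sets of the $d+1$ facets is exactly the set of proper faces of $\sigma$ of dimension $\geq 1$, so $b_\sigma=\prod_k a_{\sigma^{\hat k}}=\prod_{\tau\subsetneq\sigma,\,|\tau|\geq 2}c_\tau$ (overlaps among facets are harmless because $c_\tau\in\{0,1\}$), and multiplying by $c_\sigma$ gives the displayed identity. The same computation shows $b_\sigma=\prod_{\tau\subsetneq\sigma,\,|\tau|\geq 2}c_\tau$, ranging over proper faces only.

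With the product representation in hand, the result follows by independence and counting. The number of faces $\tau\subseteq\sigma$ with $|\tau|=j+1$ is $\binom{d+1}{j+1}$, so
\[
\mathbb{P}(a_\sigma=1)=\prod_{j=1}^{d}p_j^{\binom{d+1}{j+1}},
\qquad
\mathbb{P}(b_\sigma=1)=\prod_{j=1}^{d-1}p_j^{\binom{d+1}{j+1}},
\]
the second product omitting $j=d$ because $b_\sigma=1$ constrains only the proper faces. Re-indexing through the symmetry $\binom{d+1}{j+1}=\binom{d+1}{d-j}$ and multiplying by $\binom{n}{d+1}$ yields precisely the claimed expressions for $\bar f_d$ and $\bar\phi_d$. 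The one point demanding care — the main obstacle — is establishing the product identity rigorously: one must confirm that in the sequential construction the coin for each face $\tau$ is flipped exactly once and is independent of all coins on faces of strictly lower dimension, so that conditioning on $\tau$'s boundary being in place does not bias $c_\tau$. Everything else is a routine binomial count.
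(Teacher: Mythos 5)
Your proposal is correct, and it reaches the paper's formulas by a genuinely different route. The paper's proof computes $\bar{\phi}_d=\mathbb{E}\bigl[\sum_{\i_{d+1}}\prod_k a_{\i_{d+1}^{\hat{k}}}\bigr]$ by iterated conditioning on successively lower skeleta: it applies the tower property, uses $\mathbb{E}[a_{\tau}\mid\a_{d-1}]=p_{d-1}b_{\tau}$ to peel off one dimension at a time, and accumulates the exponents $\binom{d+1}{m}$ as the products range over codimension-$m$ faces; $\bar f_d$ then follows from $\bar f_d=p_d\bar\phi_d$. You instead build an explicit coupling in which every potential $j$-simplex $\tau$ carries a pre-flipped independent $\mathrm{Bernoulli}(p_j)$ coin $c_\tau$ with $a_\tau=c_\tau b_\tau$, prove the pointwise identity $a_\sigma=\prod_{\tau\subseteq\sigma,\,|\tau|\geq2}c_\tau$ by induction on dimension, and then take expectation once using independence and the face count $\binom{d+1}{j+1}=\binom{d+1}{d-j}$. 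The combinatorial content (counting faces of $\sigma$ by codimension) is the same in both arguments, but your static product representation makes the independence structure transparent, dispenses with the nested conditional expectations, and transfers verbatim to the per-simplex probabilities of Proposition~\ref{prop:4}; the cost is that you must justify the coupling, i.e., that flipping each coin unconditionally and setting $a_\tau=c_\tau b_\tau$ reproduces the law of the sequential construction. You correctly flag this as the one point demanding care; since the sequential rule draws the coin for $\tau$ only on the event $b_\tau=1$, and $b_\tau$ is measurable with respect to coins of strictly lower dimension, the identity $a_\tau=c_\tau b_\tau$ holds on both events $b_\tau=0$ and $b_\tau=1$, so the coupling is legitimate and the argument closes.
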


The Kahle model unifies all the random simplicial complexes we have considered so far:
\begin{equation}
\begin{split}
G(n,p)&=\Delta(n;p,0,\ldots,0),\\
X(n,p)&=\Delta(n;p,1,\ldots,1),\\
Y(n,p)&=\Delta(n;1,p,0,\ldots,0),\\
Y_d(n,p)&=\Delta(n;\underbrace{1,\ldots,1}_{d},p,0,\ldots,0).
\end{split}
\end{equation}
Since all these special cases are ERSCs, it is natural to expect that so is $\Delta(n;p_1,\ldots,p_{n-1})$. We cannot prove this using the same method as for the Erd\H{o}s--R\'{e}nyi random graphs and the random flag and Linial--Meshulam complexes in Section~\ref{sec:simple examples}. As with ERGs, analytical computation of the partition function $Z(\theta)$ for ERSCs is rarely possible, and $G(n,p)$, $X(n,p)$, and $Y_d(n,p)$ are lucky exceptions. In Appendix~\ref{sec:A3} we illustrate difficulties one has to be prepared to experience when attempting to compute the partition function for $\Delta(n;p_1,\ldots,p_{n-1})$ with $n=3$.
However, with the help of Lemmas~\ref{lemma1}\&\ref{lemma2} in Section~\ref{sec:maximum_entropy} there exists a simpler alternative proof. The fact that $\Delta(n;p_1,\ldots,p_{n-1})$ is an ERSC is a direct corollary of those lemmas.
\begin{theorem}\label{thm:Kahle}
	The Kahle  $\Delta$-ensemble is the ERSC ensemble:
	\begin{equation}
	\label{eq:Delta=ERSC}
	\Delta(n;p_1,\ldots,p_{n-1})= 
	\mathrm{ERSC}\left(\mathcal{C}_n,\left\{\{f_d\}_{d=1}^{n-1},\{\phi_d\}_{d=2}^{n-1}\right\},\left\{\{\bar{f}_d\}_{d=1}^{n-1},\{\bar{\phi}_d\}_{d=2}^{n-1}\right\}\right),
	\end{equation}
	where $\bar{f}_d$ and $\bar{\phi}_d$ are the expected  numbers of $d$-simplices in $C^{(d)}$ and $C^{[d]}$.
\end{theorem}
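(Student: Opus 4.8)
The plan is to sidestep the partition function entirely and invoke Lemma~\ref{lemma2}. Concretely, I would first obtain a closed-form expression for the probability $\mathbb{P}_{\Delta}(C)$ that the Kahle process outputs a given complex $C\in\mathcal{C}_n$, then show that $-\ln\mathbb{P}_{\Delta}(C)$ is a linear combination of the observables $\{f_d\}$ and $\{\phi_d\}$ plus an additive constant, and finally read off the conclusion from Lemma~\ref{lemma2} together with the expected values supplied by Proposition~\ref{prop:3}.

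For the first step I would argue that the generation factorizes across dimensions. At dimension $d$ the process visits every multi-index $\i_{d+1}$, but it can only place the corresponding $d$-simplex when its entire boundary is already present, \ie when $b_{\i_{d+1}}=1$; these boundaries are fixed by the choices made at dimensions below $d$. Conditioning on the event that the process outputs exactly $C$, the set of fillable positions at level $d$ is precisely $\{\i_{d+1}: b_{\i_{d+1}}=1\}$, whose cardinality is $\phi_d(C)$ by definition, and exactly $f_d(C)$ of them are occupied. Each occupied position contributes a factor $p_d$ and each of the remaining $\phi_d(C)-f_d(C)$ fillable-but-empty positions contributes $1-p_d$, while every non-fillable position is forced to $0$ and contributes a trivial factor (consistency here uses compatibility condition~(\ref{condition}), which guarantees that the non-fillable positions of the valid complex $C$ are indeed empty). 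Hence
\begin{equation}
\mathbb{P}_{\Delta}(C)=\prod_{d=1}^{n-1} p_d^{\,f_d(C)}(1-p_d)^{\phi_d(C)-f_d(C)}.
\end{equation}

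For the second step, taking logarithms gives
\begin{equation}
H_{\Delta}(C)=-\ln\mathbb{P}_{\Delta}(C)=\sum_{d=1}^{n-1}\left[\ln\frac{1-p_d}{p_d}\,f_d(C)-\ln(1-p_d)\,\phi_d(C)\right].
\end{equation}
The $d=1$ term is special: every pair of vertices has its two endpoints present automatically, so $b_{\i_2}\equiv 1$ and $\phi_1(C)=\binom{n}{2}$ is constant. I would therefore split off $-\binom{n}{2}\ln(1-p_1)$ as the constant $\xi$, leaving a genuine linear combination of $\{f_d\}_{d=1}^{n-1}$ and $\{\phi_d\}_{d=2}^{n-1}$, with coefficients $\lambda_d=\ln\frac{1-p_d}{p_d}$ and $\mu_d=-\ln(1-p_d)$. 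This is exactly the form~(\ref{eq:linearHam}) demanded by Lemma~\ref{lemma2}, so with $\bar f_d$ and $\bar\phi_d$ given by Proposition~\ref{prop:3} the claim~(\ref{eq:Delta=ERSC}) follows immediately.

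The main obstacle I anticipate is the first step, namely rigorously justifying the product form: identifying the count of fillable positions at level $d$ with $\phi_d(C)$, and verifying that, conditioned on the full lower skeleton of $C$, the decisions at level $d$ reduce to independent Bernoulli$(p_d)$ trials over exactly those positions. One must also be careful with the parameter range, since the logarithms require $p_d\in(0,1)$; the degenerate cases $p_d\in\{0,1\}$ (which recover $G(n,p)$, $X(n,p)$, and $Y_d(n,p)$) should be handled by passing to the appropriate support $\mathcal{S}\subset\mathcal{C}_n$ rather than all of $\mathcal{C}_n$, and I would note that for $p_d\in(0,1)$ every complex in $\mathcal{C}_n$ is reachable so that $\mathcal{S}=\mathcal{C}_n$ as stated.
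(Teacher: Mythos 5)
Your proposal is correct and follows essentially the same route as the paper's own proof: factor $\mathbb{P}_{\Delta}(C)$ over dimensions as $\prod_{d=1}^{n-1}p_d^{f_d(C)}(1-p_d)^{\phi_d(C)-f_d(C)}$, identify $-\ln\mathbb{P}_{\Delta}(C)$ as a linear combination of $\{f_d\}_{d=1}^{n-1}$ and $\{\phi_d\}_{d=2}^{n-1}$ with the constant $\binom{n}{2}\ln\frac{1}{1-p_1}$ split off, and conclude via Lemma~\ref{lemma2}. Your additional care about the degenerate parameter values $p_d\in\{0,1\}$ and about rigorously counting the fillable positions is a reasonable refinement but does not change the argument.
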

\begin{proof}
For any $C\in\mathcal{C}_n$, the probability $\mathbb{P}_\Delta(C)$ that $\Delta(n,p_1,\ldots,p_{n-1})$ generates $C$ can be computed by induction:
\begin{equation}\label{eq:Delta}
\begin{split}
\mathbb{P}_\Delta(C)=& \prod_{d=1}^{n-1}\mathbb{P}_\Delta\left.\left(C^{(d)}\right|C^{(d-1)}\right)=\prod_{d=1}^{n-1}p_d^{f_d(C)}(1-p_d)^{\phi_d(C)-f_d(C)}.
\end{split}
\end{equation}
Indeed, given the $(d-1)$-skeleton $C^{(d-1)}$, the maximum possible number of $d$-simplices in $C^{(d)}$ is exactly $\phi_d(C)$, the number of $d$-simplices in the filled skeleton $C^{[d]}$. Since each of these $d$-simplices appears independently with probability $p_d$, the conditional probability $\mathbb{P}_\Delta\left.\left(C^{(d)}\right|C^{(d-1)}\right)=p_d^{f_d(C)}(1-p_d)^{\phi_d(C)-f_d(C)}$, where $f_d(C)$ is the actual number of $d$-simplices in $C^{(d)}$.

The Hamiltonian of $C$ is therefore
\begin{equation}
\begin{split}
H_\Delta(C)=&\sum_{d=1}^{n-1}\left(f_d(C)\ln\frac{1-p_d}{p_d}+\phi_d(C)\ln\frac{1}{1-p_d}\right)\\
=&\sum_{d=1}^{n-1}f_d(C)\ln\frac{1-p_d}{p_d} + \sum_{d=2}^{n-1}\left(\phi_d(C)\ln\frac{1}{1-p_d}\right) + {n\choose 2}\ln\frac{1}{1-p_{1}},
\end{split}
\end{equation}
since $\phi_1(C)={n\choose 2}$ for any $C$. Using Lemma~\ref{lemma2} with
\begin{equation}
\begin{split}
\{h_i^*\}=&\{f_1,\ldots,f_{n-1},\phi_2,\ldots,\phi_{n-1}\},\\
\{\lambda_i\}=&\left\{\ln\frac{1-p_1}{p_1},\ldots,\ln\frac{1-p_{n-1}}{p_{n-1}},\ln\frac{1}{1-p_2},\ldots,\ln\frac{1}{1-p_{n-1}}\right\},\\
\xi=&{n\choose 2}\ln\frac{1}{1-p_{1}},\\
\end{split}
\end{equation}
completes the proof.
\end{proof}

\section{General Random Simplicial Complexes with Independent Simplices}

Finally, we introduce and consider the most general case of random simplicial complexes with statistically independent simplices. In this case each simplex has its own individual probability of appearance. To stay as general as possible, we must allow for even the $0$-simplices (vertices) to be present with any probabilities, which are not necessarily equal to $1$. We denote this new model by $\D(n;\p_1,\ldots,\p_n)$, or $\D$ for brevity, where $\p_d=\{p_{\i_d}\}$ is a collection of ${n\choose d}$ appearance probabilities for each  $(d-1)$-simplex. Whereas in $\Delta$ in the previous section, the subindex $d$ in $p_d$ refers to the simplex dimension, in $\D$ the sub-multi-index $\i_d$ in $p_{\i_d}$ refers to the specific $(d-1)$-simplex $\{\i_d\}$. To generate $C\sim\D$, we first create its $0$-skeleton by having vertices $\{\i_1\}\in C$ with probabilities $p_{\i_1}$, $\i_1=1,\ldots,n$. Then, for $d=1,\ldots,n-1$, we add every $d$-simplex $\{\i_{d+1}\}$ with probability $p_{\i_{d+1}}$, but only if the entire $(d-1)$-dimensional boundary of that simplex is already in place. Figure~\ref{fig4} illustrates the generation of a $2$-complex from $\D$ with $n=10$.

\begin{figure}
	\centerline{\includegraphics[width=140mm]{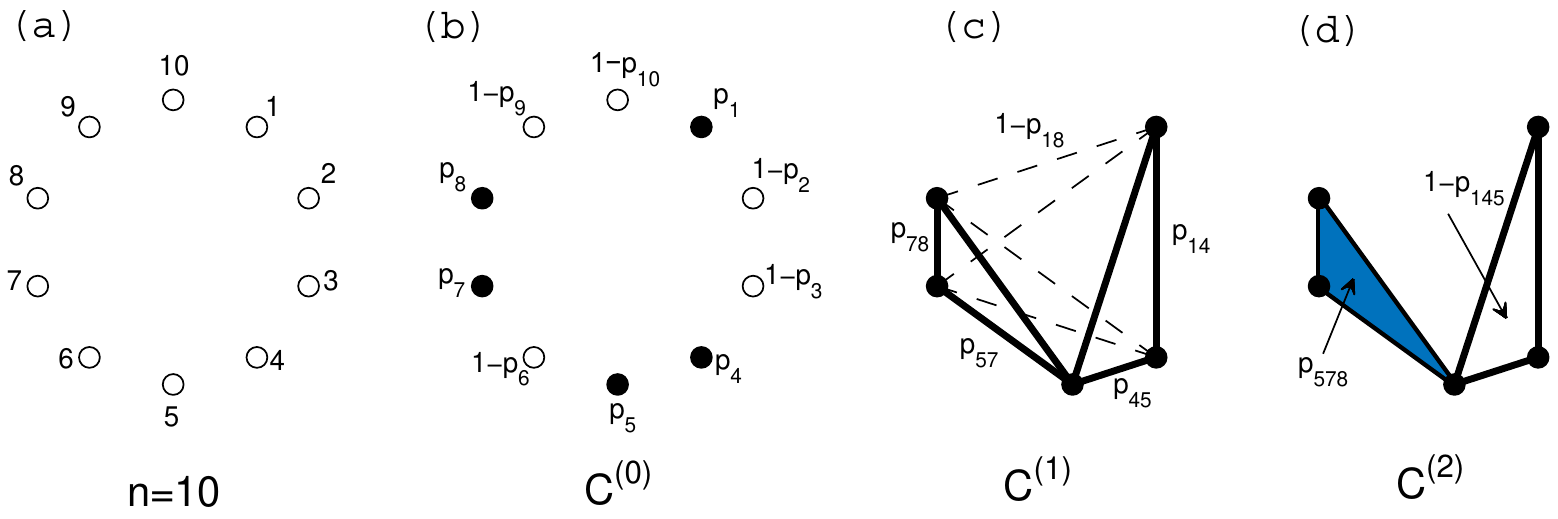}}
	\caption{\small \textbf{Sampling from $\D$ with $n=10$.} Panel~(a) shows the starting point: $n=10$ potential $0$-simplices (vertices) represented by empty dots. At the first step, shown in Panel~(b), we create the $0$-skeleton of a future complex $C$ by including (excluding) every vertex $i$ in (from) $C^{(0)}$ with probability $p_i$ (with probability $1-p_i$). In this example, five vertices represented by filled dots belong to  $C^{(0)}$. Next, in Panel~(c), we generate the $1$-skeleton by creating $1$-simplices (edges). Each of the ${5 \choose 2}$ possible edges $\{\i_2\}=\{1,4\},\{1,5\},\ldots,\{7,8\}$ appears in $C^{(1)}$ with probability $p_{\i_2}$. We represent the accepted (rejected) edges by solid (dashed) lines. Finally, in Panel~(d), we create the $2$-skeleton by adding $2$-simplices (triangles). There are only two possible triangles in $C^{(2)}$: $\{1,4,5\}$ and $\{5,7,8\}$. Here, the former (empty) was rejected with probability $1-p_{145}$, and the latter (filled) was accepted with probability $p_{578}$.  }
	\label{fig4}
\end{figure}

More formally, we have the following definition. Let $\mathcal{C}_{\leq n}=\cup_{k=1}^n\mathcal{C}_k$
denote the set of all simplicial complexes with $n$ vertices or less. As in Section~\ref{notation}, any $C\in\mathcal{C}_{\leq n}$ is uniquely determined by a collection of its adjacency tensors $\a_d=\{a_{\i_d}\}$, $d=1,\ldots,n$, except that now $\a_1$ is not necessarily equal to the all-ones vector  ${\bf 1}_n$, since $C$ may have less than $n$ vertices.
\begin{definition} The model \label{def:fat-delta}
$\D(n;\p_1,\ldots,\p_n)$ is a random simplicial complex model that generates $C\in\mathcal{C}_{\leq n}$ as follows:
for $d=0,\ldots,n-1$, for every $\i_{d+1}$,
	\begin{equation}
	\begin{split}
	\mbox{if } b_{\i_{d+1}}=0  \hspace{2mm}\Rightarrow \hspace{2mm}& \mbox{set } a_{\i_{d+1}}=0,\\
	\mbox{if } b_{\i_{d+1}}=1  \hspace{2mm}\Rightarrow \hspace{2mm}& \mbox{set } a_{\i_{d+1}}=\begin{cases} 1 &\mbox{with probability } p_{\i_{d+1}},  \\
	0 & \mbox{with probability } 1-p_{\i_{d+1}}. \end{cases}
	\end{split}
	\end{equation}
\end{definition}
Here, for convenience, we use the convention that $b_{\i_1}=1$ for all $\i_1=1,\ldots,n$.
If $p_{\i_{d+1}}=p_d$ for $d=0,\ldots,n-1$ and $p_0=1$, then we recover the original Kahle's model $\Delta(n;p_1,\ldots,p_{n-1})$ from the previous section.

To give the expressions for the expected values of the observables ${a}_{\i_d}$ and ${b}_{\i_d}$ in $\D(n;\p_1,\ldots,\p_n)$, we need a bit of new notation. Let multi-index $\k_m=k_1,\ldots,k_m$ denote an $m$-tuple with increasing values $1\leq k_1<\ldots<k_m \leq d$, and $\i_d^{\widehat{\k}_m}=i_1,\ldots,\widehat{i_{k_1}},\ldots,\widehat{i_{k_m}},\ldots,i_d$ be the $(d-m)$-long multi-index with $i_{k_1},\ldots,i_{k_m}$ omitted, with a convention that $\k_0=\varnothing$ and $\i_d^{\widehat{\k}_0}=\i_d$. In Appendix~\ref{sec:A2} we prove the following proposition.
\begin{proposition}\label{prop:4}
	Let $C\sim\D(n;\p_1,\ldots,\p_{n})$. The expected values of the observables ${a}_{\i_d}$ and ${b}_{\i_d}$ are
	\begin{equation}\label{eq:exp_a_b}
	\bar{a}_{\i_d}=\prod_{m=0}^{d-1}\prod_{\k_m}p_{\i_d^{\widehat{\k}_m}} \hspace{3mm} \mbox{and} \hspace{3mm} \bar{b}_{\i_d}=\prod_{m=1}^{d-1}\prod_{\k_m}p_{\i_d^{\widehat{\k}_m}}.
	\end{equation}
\end{proposition}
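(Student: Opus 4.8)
The plan is to trace the generation process back to the independent coin flips that drive it, and to express the presence of each simplex as an idempotent product of these flips over its entire face poset. Concretely, I would attach to every potential simplex $\{\i_{d}\}$ an independent Bernoulli variable $\beta_{\i_d}$ with $\mathbb{P}(\beta_{\i_d}=1)=p_{\i_d}$, so that Definition~\ref{def:fat-delta} is captured by the single recursion $a_{\i_d}=b_{\i_d}\,\beta_{\i_d}$, valid for every $d\geq 1$ under the stated convention $b_{\i_1}=1$. Here $\beta_{\i_d}$ decides the outcome of the $\{\i_d\}$-trial, and $b_{\i_d}=\prod_{k=1}^{d}a_{\i_d^{\hat{k}}}$ enforces that the trial is consulted only when the boundary is already present.

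The central claim I would establish, by induction on $d$, is that
\begin{equation}
a_{\i_d}=\prod_{m=0}^{d-1}\prod_{\k_m}\beta_{\i_d^{\widehat{\k}_m}},
\end{equation}
i.e.\ that $\{\i_d\}$ is present exactly when the trials attached to $\{\i_d\}$ and to each of its faces all succeed. The base case $d=1$ is immediate, since $a_{\i_1}=\beta_{\i_1}$ and a vertex is its own only face. For the inductive step I substitute the inductive hypothesis for each facet $a_{\i_d^{\hat{k}}}$ into $b_{\i_d}$. Every proper face of $\{\i_d\}$ omits at least one vertex $i_k$ and hence occurs among the faces of the corresponding $\{\i_d^{\hat{k}}\}$, while only proper faces arise this way; since the $\beta$'s are $\{0,1\}$-valued the product is idempotent, so the repeated factors collapse and $b_{\i_d}=\prod_{m=1}^{d-1}\prod_{\k_m}\beta_{\i_d^{\widehat{\k}_m}}$, a product over the proper faces. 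Multiplying by the $m=0$ factor $\beta_{\i_d}$ gives the claim.

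Taking expectations then finishes both formulas in (\ref{eq:exp_a_b}) simultaneously: mutual independence of the $\beta$'s lets the expectation of each product split into a product of the means $\mathbb{E}[\beta_{\i_d^{\widehat{\k}_m}}]=p_{\i_d^{\widehat{\k}_m}}$, the expression for $a_{\i_d}$ yielding $\bar{a}_{\i_d}$ and the expression for $b_{\i_d}$ yielding $\bar{b}_{\i_d}$. The one genuinely delicate point, and the main obstacle, is the bookkeeping in the inductive step: the facet variables $a_{\i_d^{\hat{k}}}$ are \emph{not} independent --- two facets share a codimension-two face --- so one cannot multiply their expectations directly. Passing to the independent $\beta$'s together with the idempotence $\beta_\tau^{\,j}=\beta_\tau$ is exactly what converts these overlapping contributions into a clean, repetition-free product; it then remains only to check that the index set $\{(m,\k_m)\}$ is in bijection with the proper faces (for $\bar{b}$) or with all faces (for $\bar{a}$) of $\{\i_d\}$.
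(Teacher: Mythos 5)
Your argument is correct, and it takes a genuinely different route from the paper's. The paper computes $\bar{b}_{\i_d}=\mathbb{E}\bigl[\prod_{k=1}^d a_{\i_d^{\hat{k}}}\bigr]$ by repeatedly applying the tower property: it conditions on the skeleton $\a_{d-2}$, uses the conditional independence of the facet indicators to replace each $a_{\i_d^{\hat{k}}}$ by $p_{\i_d^{\hat{k}}}b_{\i_d^{\hat{k}}}$, rewrites $\prod_k b_{\i_d^{\hat{k}}}$ as $\prod_{\k_2}a_{\i_d^{\widehat{\k}_2}}$, and iterates down through the dimensions until it reaches the vertices, finishing with $\bar{a}_{\i_d}=\mathbb{E}[p_{\i_d}b_{\i_d}]$. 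You instead couple the process to a family of mutually independent coins $\beta_\sigma$, one per potential simplex, establish the \emph{pathwise} identity $a_{\i_d}=\prod_{m=0}^{d-1}\prod_{\k_m}\beta_{\i_d^{\widehat{\k}_m}}$ by a single induction, and take one expectation at the end. The two proofs rest on the same two facts --- boundary presence is a product over faces, and repeated Boolean factors collapse by idempotence --- but your organization isolates and makes explicit the one delicate point that the paper glosses over: the facet variables $a_{\i_d^{\hat{k}}}$ overlap in their lower-dimensional faces and are not independent, so the factorization of the expectation must happen at the level of the independent $\beta$'s, not the $a$'s. (The paper handles the same issue implicitly when it silently collapses $\prod_k\prod_j a_{\i_d^{\hat{k}\hat{j}}}$ into $\prod_{\k_2}a_{\i_d^{\widehat{\k}_2}}$.) What the paper's approach buys is that it never leaves the model's native random variables; what yours buys is a cleaner separation between the combinatorics (the bijection between the index set $\{(m,\k_m)\}$ and the faces of $\{\i_d\}$) and the probability (one application of independence), plus a stronger intermediate statement --- an almost-sure product formula rather than just an identity in expectation. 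You should state explicitly that pre-drawing all coins leaves the joint law of Definition~\ref{def:fat-delta} unchanged, but that is a one-line observation given that the trials in the model are independent across simplices.
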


Lemma~\ref{lemma2} helps again to prove that the general model $\D$ is also an ERSC.
\begin{theorem}\label{thm:3}
	The $\D$-ensemble is the ERSC ensemble:
	\begin{equation}
	\label{eq:D_gen=ERSC}
	\D(n;\p_1,\ldots,\p_{n})=\mathrm{ERSC}\left(\mathcal{C}_{\leq n},\left\{\{a_{\i_d}\}_{d=1}^n,\{b_{\i_d}\}_{d=2}^n\right\},\left\{\{\bar{a}_{\i_d}\}_{d=1}^n,\{\bar{b}_{\i_d}\}_{d=2}^n\right\}\right),
	\end{equation}
	where $\bar{a}_{\i_d}$ and $\bar{b}_{\i_d}$ are the expected values of the observables $a_{\i_d}$ and $b_{\i_d}$.
\end{theorem}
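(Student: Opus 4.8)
The plan is to mirror the proof of Theorem~\ref{thm:Kahle} exactly: write the generating probability $\mathbb{P}_\D(C)$ in closed form, recognize its negative logarithm as a linear combination of the observables $\{a_{\i_d}\}$ and $\{b_{\i_d}\}$ plus a constant, and then invoke Lemma~\ref{lemma2} in the form~\eqref{eq:lemma2}. Because the appearance of each potential $(d{-}1)$-simplex $\{\i_d\}$ is an independent Bernoulli trial that is performed only when its whole boundary is already in place, the probability factorizes over all potential simplices. A simplex $\{\i_{d+1}\}$ with $b_{\i_{d+1}}=1$ contributes $p_{\i_{d+1}}$ when present and $1-p_{\i_{d+1}}$ when absent, while a simplex with $b_{\i_{d+1}}=0$ is forced to be absent and contributes a factor of $1$. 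Encoding the present/absent dichotomy through the exponents $a_{\i_{d+1}}$ and $b_{\i_{d+1}}-a_{\i_{d+1}}$ (the compatibility condition~\eqref{condition} guarantees $a_{\i_{d+1}}\leq b_{\i_{d+1}}$, so these exponents are nonnegative, and the $b_{\i_{d+1}}=0$ case automatically yields a factor of $1$), I obtain
\begin{equation}
\mathbb{P}_\D(C)=\prod_{d=0}^{n-1}\prod_{\i_{d+1}}p_{\i_{d+1}}^{a_{\i_{d+1}}}(1-p_{\i_{d+1}})^{b_{\i_{d+1}}-a_{\i_{d+1}}}.
\end{equation}
This is the natural $\D$-analog of~\eqref{eq:Delta}, with the per-simplex probabilities $p_{\i_{d+1}}$ replacing the single dimension-dependent $p_d$, and with $\sum_{\i_{d+1}}a_{\i_{d+1}}$ and $\sum_{\i_{d+1}}b_{\i_{d+1}}$ playing the roles of $f_d$ and $\phi_d$.

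Next I would take the negative logarithm and collect the $a$- and $b$-exponents, giving the Hamiltonian
\begin{equation}
H_\D(C)=-\ln\mathbb{P}_\D(C)=\sum_{d=0}^{n-1}\sum_{\i_{d+1}}\left(a_{\i_{d+1}}\ln\frac{1-p_{\i_{d+1}}}{p_{\i_{d+1}}}+b_{\i_{d+1}}\ln\frac{1}{1-p_{\i_{d+1}}}\right),
\end{equation}
which is already linear in the $a_{\i_{d+1}}$ and $b_{\i_{d+1}}$. The only step that requires care is the bookkeeping at the vertex level: the convention $b_{\i_1}=1$ means the $d=0$ terms of the $b$-sum do not depend on $C$ and must be separated out as the additive constant $\xi=\sum_{\i_1}\ln\frac{1}{1-p_{\i_1}}$, whereas the $d=0$ terms of the $a$-sum (the vertex variables $a_{\i_1}$, which are genuinely random in $\D$) remain as observables. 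This is precisely why the observable family in~\eqref{eq:D_gen=ERSC} is asymmetric: $a_{\i_d}$ ranges over $d=1,\ldots,n$ but $b_{\i_d}$ only over $d=2,\ldots,n$.

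The proof then concludes by applying Lemma~\ref{lemma2} with
\begin{equation}
\{h_i^*\}=\left\{\{a_{\i_d}\}_{d=1}^n,\{b_{\i_d}\}_{d=2}^n\right\},\quad\{\lambda_i\}=\left\{\left\{\ln\tfrac{1-p_{\i_d}}{p_{\i_d}}\right\}_{d=1}^n,\left\{\ln\tfrac{1}{1-p_{\i_d}}\right\}_{d=2}^n\right\},\quad\xi=\sum_{\i_1}\ln\tfrac{1}{1-p_{\i_1}},
\end{equation}
whose expected values $\bar{a}_{\i_d}$ and $\bar{b}_{\i_d}$ are supplied by Proposition~\ref{prop:4}. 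I expect no genuine difficulty beyond this indexing: one should assume $0<p_{\i_d}<1$ so that every complex in $\mathcal{C}_{\leq n}$ has positive probability (hence the support is all of $\mathcal{C}_{\leq n}$ and each logarithm is finite), and one should note that the generative process can indeed produce every $C\in\mathcal{C}_{\leq n}$, since it only ever adds a simplex whose boundary is already present, so the compatibility condition~\eqref{condition} is automatically respected. Given these observations, the statement is a direct corollary of Lemma~\ref{lemma2}, exactly as in the proof of Theorem~\ref{thm:Kahle}.
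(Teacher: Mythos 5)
Your proposal is correct and follows essentially the same route as the paper: factorize $\mathbb{P}_\D(C)$ over all potential simplices as $\prod_{d}\prod_{\i_{d}}p_{\i_{d}}^{a_{\i_{d}}}(1-p_{\i_{d}})^{b_{\i_{d}}-a_{\i_{d}}}$, separate the constant vertex-level $b$-terms into $\xi=\sum_{\i_1}\ln\frac{1}{1-p_{\i_1}}$, and apply Lemma~\ref{lemma2} with the same observables, coefficients, and Proposition~\ref{prop:4} for the constraint values. The only (welcome) additions beyond the paper's argument are your explicit remarks on $0<p_{\i_d}<1$ and on the support being all of $\mathcal{C}_{\leq n}$.
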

\begin{proof}
The probability that $\D(n;\p_1,\ldots,\p_{n})$ generates  $C\in\mathcal{C}_{\leq n}$ is
\begin{equation}\label{eq:Delta_general}
\begin{split}
\mathbb{P}_\D(C)=&\mathbb{P}_\D\left(C^{(0)}\right) \prod_{d=1}^{n-1}\mathbb{P}_\D\left(C^{(d)}|C^{(d-1)}\right)\\=&
\prod_{\i_1}p_{\i_1}^{a_{\i_1}}(1-p_{\i_1})^{1-a_{\i_1}}\times\prod_{d=1}^{n-1}\prod_{\i_{d+1}}p_{\i_{d+1}}^{a_{\i_{d+1}}}(1-p_{\i_{d+1}})^{b_{\i_{d+1}}-a_{\i_{d+1}}}\\
=&\prod_{d=1}^{n}\prod_{\i_{d}}p_{\i_{d}}^{a_{\i_{d}}}(1-p_{\i_{d}})^{b_{\i_{d}}-a_{\i_{d}}}.
\end{split}
\end{equation}
The Hamiltonian of $C$ is then
\begin{equation}
\begin{split}
H_\D(C)=&\sum_{d=1}^{n}\sum_{\i_{d}}\left(a_{\i_d}\ln\frac{1-p_{\i_d}}{p_{\i_d}}+b_{\i_d}\ln\frac{1}{1-p_{\i_d}}\right)\\=&\sum_{d=1}^{n}\sum_{\i_{d}}\alpha_{\i_d}a_{\i_d}+\sum_{d=2}^{n}\sum_{\i_{d}}\beta_{\i_d}b_{\i_d}+\sum_{\i_1}\ln\frac{1}{1-p_{\i_1}},
\end{split}
\end{equation}
where $\alpha_{\i_d}$ and $\beta_{\i_d}$ are the Lagrange multipliers coupled to observables $a_{\i_d}$ and $b_{\i_d}$,
\begin{equation}
\alpha_{\i_d}=\ln\frac{1-p_{\i_d}}{p_{\i_d}} \hspace{3mm} \mbox{and} \hspace{3mm} \beta_{\i_d}=\ln\frac{1}{1-p_{\i_d}}.
\end{equation}
Using Lemma~\ref{lemma2} with
\begin{equation}
\begin{split}
\{h_i^*\}=&\left\{\{a_{\i_1}\},\ldots,\{a_{\i_n}\},\{b_{\i_2}\},\ldots,\{b_{\i_n}\}\right\},\\
\{\lambda_i\}=&\left\{\{\alpha_{\i_1}\},\ldots,\{\alpha_{\i_n}\},\{\beta_{\i_2}\},\ldots,\{\beta_{\i_n}\}\right\},\\
\xi=&\sum_{\i_1}\ln\frac{1}{1-p_{\i_1}},
\end{split}
\end{equation}
completes the proof.
\end{proof}

\section{Discussion}

In summary, exponential random simplicial complexes (ERSCs) are a natural higher dimensional analog of exponential random graphs which are extensively used for modeling network data and statistical inference. An ERSC ensemble is a maximum-entropy ensemble of simplicial complexes under ``soft'' constraints that fix expected values of some observables or properties of simplicial complexes.
We have developed the formalism for ERSCs, and introduced
the most general generative model of random simplicial complexes $\D$ with statistically independent simplices. This model has as special cases several popular models studied in the literature: Erd\H{o}s--R\'{e}nyi random graphs, random flag complexes, Linial--Meshulam complexes, and Kahle's $\Delta$-ensembles. As all these models, $\D$ is an ERSC ensemble. The constraints in this ensemble are expected number of simplices and their boundaries.

This result is a direct corollary of the general observation that any probability distribution $\mathbb{P}$ is maximum-entropy under the constraint that the expected value of $-\ln\mathbb{P}$ is equal to the entropy of $\mathbb{P}$.
This observation dramatically simplifies the representation of many ensembles of random simplicial complexes as ERSCs since the calculation of the partition function is no longer needed. For example, to show that the Erd\H{o}s--R\'{e}nyi random graphs $G(n,p)$ are exponential random graphs with a given expected number of edges, one does not really have to calculate the partition function. This calculation is trivial in the Erd\H{o}s--R\'{e}nyi case or in the general case of exponential random graphs with statistically independent edges \cite{PaNe04}. However, the analogous calculation for the general case of random simplicial complexes $\D$ with statistically independent simplices appears to be intractable.

The multi-parameter model $\D(n;\p_1,\ldots,\p_n)$ is the ERSC ensemble with two types of constrained observables: $\{a_{\i_d}\}$ and $\{b_{\i_d}\}$. The observables of the first type are simplices themselves: $a_{\i_d}(C)=1$ if the $(d-1)$-simplex $\{i_d\}$ belongs to $C$, and zero otherwise. The observables of the second type are their boundaries: $b_{\i_d}(C)=1$ if the entire   $(d-2)$-dimensional boundary of simplex $\{\i_d\}$ belongs to $C$, and zero otherwise. Theorem~\ref{thm:3} states that $\D(n;\p_1,\ldots,\p_n)$ is a solution of the following optimization problem:
\begin{align}
&S(\mathbb{P})\rightarrow\max, \hspace{5mm} \sum_{C\in\mathcal{C}_{\leq n}}\mathbb{P}(C)=1,\\  &\mathbb{E}_{\mathbb{P}}[a_{\i_d}]=\bar{a}_{\i_d}, d=1,\ldots,n, \hspace{5mm}  \mathbb{E}_{\mathbb{P}}[b_{\i_d}]=\bar{b}_{\i_d}, d=2,\ldots,n. \label{eq:soft}
\end{align}
If we drop the observables of the second type in this optimization problem, we alter the maximum-entropy distribution as illustrated in Figure~\ref{fig5}. Since the distribution has changed, ensemble $\L(n;\p_1,\ldots,\p_n)=\mathrm{ERSC}\left(\mathcal{C}_{\leq n},\{a_{\i_d}\}_{d=1}^n,\{\bar{a}_{\i_d}\}_{d=1}^n\right)$ defined by this distribution is now also different from $\D(n;\p_1,\ldots,\p_n)$.

\begin{figure}
	\centerline{\includegraphics[width=70mm]{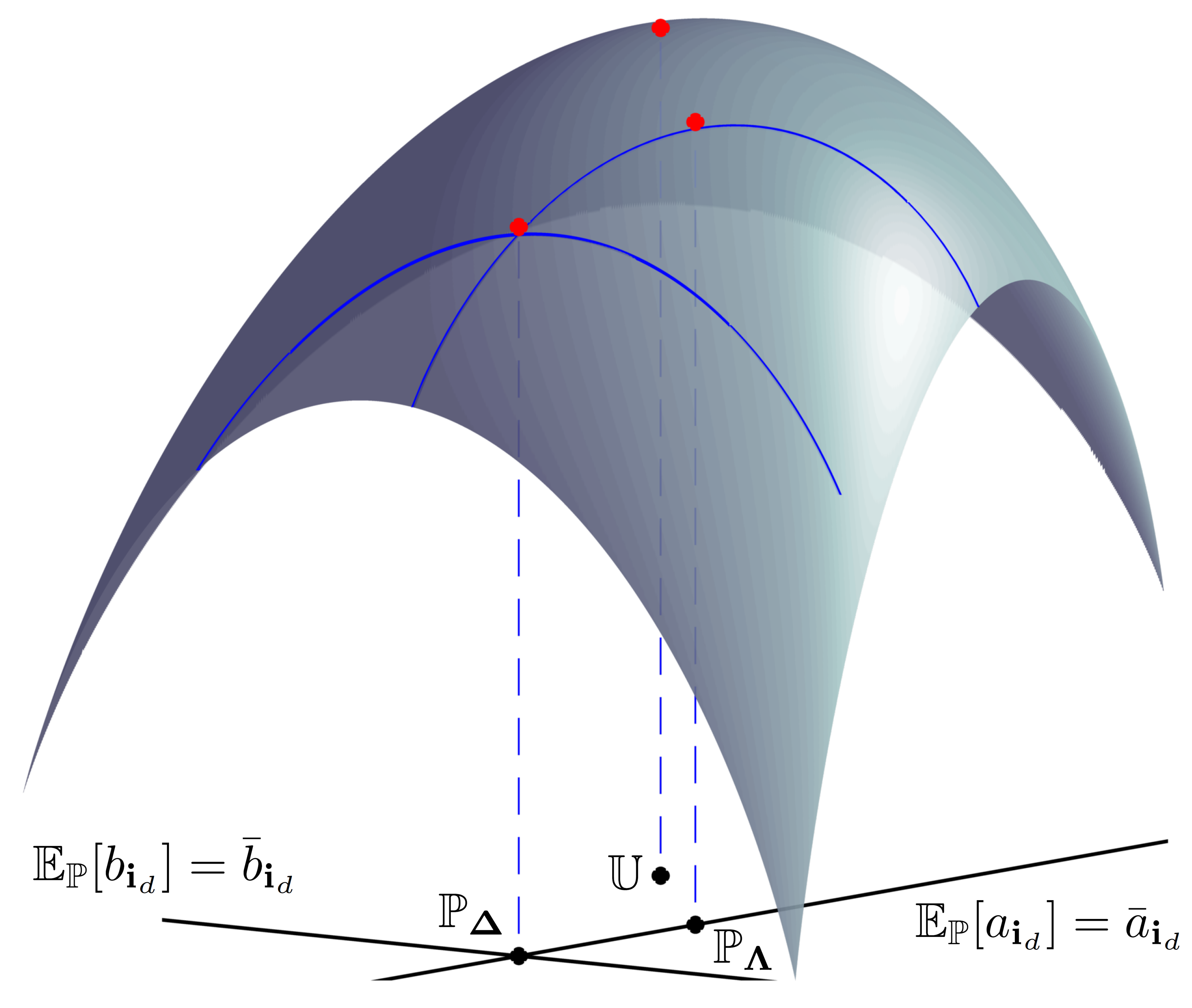}}
	\caption{\small \textbf{Constrained entropy maximization.} The surface represents the Gibbs entropy $S$, which, in this schematic example, is a function on the set of all probability distributions on $\mathcal{C}_{\leq n}$. The global maximum corresponds to the uniform distribution $\mathbb{U}$, which is the maximum-entropy distribution among all distributions supported on $\mathcal{C}_{\leq n}$. Theorem~\ref{thm:3} shows that if we have two sets of constraints, $\mathbb{E}_{\mathbb{P}}[a_{\i_d}]=\bar{a}_{\i_d}$ and  $\mathbb{E}_{\mathbb{P}}[b_{\i_d}]=\bar{b}_{\i_d}$, then the resulting maximum-entropy distribution is $\mathbb{P}_\D$. If we drop the second set of constraints, then we get some other maximum-entropy distribution $\mathbb{P}_{\L}\neq\mathbb{P}_\D$ for ensemble $\L\neq\D$.
}
	\label{fig5}
\end{figure}

The fact that the second type of boundary-presence observables are also constrained in $\D(n;\p_1,\ldots,\p_n)$ may appear quite unexpected at first glance. The reason for the presence of these constraints is that simplex existence probabilities are actually conditional, where the conditions are the presence of simplex boundaries. If we go from conditional to unconditional probabilities, we change $\D$ to $\L$. Indeed, in $\D$, $p_{\i_d}$ is the conditional probability of the $(d-1)$-simplex $\{\i_d\}$ to appear in $C$, given that its $(d-2)$-dimensional boundary is already in place,
\begin{equation}\label{eq:condprob}
p_{\i_d}=\mathbb{P}_{\D}(a_{\i_d}=1|b_{\i_d}=1)
=\frac{\mathbb{P}_{\D}(a_{\i_d}=1,b_{\i_d}=1 )}{\mathbb{P}_{\D}(b_{\i_d}=1)}=\frac{\mathbb{P}_{\D}(a_{\i_d}=1)}{\mathbb{P}_{\D}(b_{\i_d}=1)},
\end{equation}
where the last equation follows from the compatibility condition $a_{\i_d}=1 \Rightarrow  b_{\i_d}=1$. This means that the unconditional probability of having $\{\i_d\}\in C$ is
\begin{equation}
\mathbb{P}_{\D}(a_{\i_d}=1)=p_{\i_d}\mathbb{P}_{\D}(b_{\i_d}=1),
\end{equation}
and, therefore, the expected values of observables $a_{\i_d}$ and $b_{\i_d}$ satisfy
\begin{equation}\label{eq:a=pb}
\bar{a}_{\i_d}=\mathbb{E}_{\D}[a_{\i_d}]=\mathbb{P}_{\D}(a_{\i_d}=1)=p_{\i_d}\mathbb{P}_{\D}(b_{\i_d}=1)=p_{\i_d}\bar{b}_{\i_d}.
\end{equation}
Thus, if we want to represent $\D$ as an ERSC and we fixed the expected values of the observables of the first type $a_{\i_d}$, we must also fix the expected values of the  observables of the second type $b_{\i_d}$. Moreover,  these expected values are not independent and must satisfy $\bar{a}_{\i_d}=p_{\i_d}\bar{b}_{\i_d}$, which is consistent with Proposition~\ref{prop:4}. In Appendix~\ref{sec:A4} we consider a special case with $n=3$, $p_{\i_1}=1$, $p_{\i_2}=p_1$, and $p_{\i_3}=p_2$, and explicitly show that the maximum-entropy distributions with and without the second type constraints are different.

To conclude, $\D\neq\L$.
From the maximum-entropy point of view, the ensemble $\L$, with only the observables of the first type constrained, appears more natural than $\D$. Yet $\D$ is more natural than $\L$ in terms of simplicity of its constructive Definition~\ref{def:fat-delta} that allows for efficient sampling of simplicial complexes. We leave open the questions of whether there exist ways to calculate the probability distribution $\mathbb{P}_{\L}(C)$ in ensemble $\L$, and to efficiently sample from it, i.e., to easily generate simplicial complexes $C$ with this probability.

\section*{Acknowledgments}
We thank  Alexandru Suciu, Gabor Lippner, and Christopher King for very useful discussions, comments, and suggestions.
This work was supported by DARPA grant No. HR0011-12-1-0012; NSF grants No. CNS-1344289, CNS-1442999,
CNS-0964236, CNS-1441828, CNS-1039646, and CNS-1345286; and by Cisco Systems.

\appendix
\renewcommand{\thesubsection}{A.\arabic{subsection}}
\renewcommand{\theequation}{A.\arabic{equation}}
\newpage
\section*{Appendix}

\subsection{Proof of Proposition~\ref{prop:3}}\label{sec:A1}
Let us first compute the expected number of $d$-simplices in $C^{[d]}$, where $C\sim\Delta(n;p_1,\ldots,p_{n-1})$.
\begin{equation}
\begin{split}
\bar{\phi}_d= &\mathbb{E}[\phi_d]=\mathbb{E}\left[\sum_{\i_{d+1}}b_{\i_{d+1}} \right]=\mathbb{E}\left[\sum_{\i_{d+1}}\prod_{k=1}^{d+1} a_{\i_{d+1}^{\hat{k}}} \right]\\
=&\left.
\mathbb{E}\left[\mathbb{E}\left[\sum_{\i_{d+1}}\prod_{k=1}^{d+1} a_{\i_{d+1}^{\hat{k}}}\right|\a_{d-1}\right]\right]=\mathbb{E}\left[\sum_{\i_{d+1}}\prod_{k=1}^{d+1} \left.\mathbb{E}\left[a_{\i_{d+1}^{\hat{k}}}\right|\a_{d-1}\right]\right].
\end{split}
\end{equation}
If the boundary of $(d-1)$-simplex $\{\i_{d+1}^{\hat{k}}\}$ belongs to $C$, \ie $ b_{\i_{d+1}^{\hat{k}}}=1$, then $\{\i_{d+1}^{\hat{k}}\}\in C$, \ie $a_{\i_{d+1}^{\hat{k}}}=1$, with probability $p_{d-1}$. Otherwise, if $ b_{\i_{d+1}^{\hat{k}}}=0$, then automatically $a_{\i_{d+1}^{\hat{k}}}=0$. Therefore, the inner expected value:
\begin{equation}
\left.\mathbb{E}\left[a_{\i_{d+1}^{\hat{k}}}\right|\a_{d-1}\right]
 =p_{d-1}b_{\i_{d+1}^{\hat{k}}}.
\end{equation}
So,
\begin{equation}
\bar{\phi}_d=\mathbb{E}\left[\sum_{\i_{d+1}}\prod_{k=1}^{d+1}p_{d-1}b_{\i_{d+1}^{\hat{k}}}\right]=p_{d-1}^{d+1}\mathbb{E}\left[\sum_{\i_{d+1}}\prod_{\k_2}a_{\i_{d+1}^{\widehat{\k}_2}}\right],
\end{equation}
where $\k_2=k_1,k_2$ is a pair of indices $1\leq k_1<k_2\leq d+1$, and $\i_{d+1}^{\widehat{\k}_2}=i_1,\ldots,\widehat{i_{k_1}},\ldots,\widehat{i_{k_2}},\ldots,i_{d+1}$ is the $(d-1)$-long multi-index with $i_{k_1}$ and $i_{k_2}$ omitted. Proceeding in this manner, we have:
\begin{equation}
\begin{split}
\bar{\phi}_d=&p_{d-1}^{d+1}\left.
\mathbb{E}\left[\mathbb{E}\left[\sum_{\i_{d+1}}\prod_{\k_2} a_{\i_{d+1}^{\widehat{\k}_2}}\right|\a_{d-2}\right]\right]=
p_{d-1}^{d+1}
\mathbb{E}\left[\sum_{\i_{d+1}}\prod_{\k_2}\left. \mathbb{E}\left[a_{\i_{d+1}^{\widehat{\k}_2}}\right|\a_{d-2}\right]\right]\\
=&p_{d-1}^{d+1}
\mathbb{E}\left[\sum_{\i_{d+1}}\prod_{\k_2}p_{d-2}b_{\i_{d+1}^{\widehat{\k}_2}}\right]=p_{d-1}^{{d+1\choose 1}}p_{d-2}^{{d+1\choose 2}}\mathbb{E}\left[\sum_{\i_{d+1}}\prod_{\k_3}a_{\i_{d+1}^{\widehat{\k}_3}}\right]=\ldots\\
=&p_{d-1}^{{d+1\choose 1}}\ldots p_{1}^{{d+1\choose d-1}}\mathbb{E}\left[\sum_{\i_{d+1}}\prod_{\k_d}a_{\i_{d+1}^{\widehat{\k}_d}}\right]={n \choose d+1}\prod_{k=1}^{d-1}p_k^{{d+1\choose d-k}}.
\end{split}
\end{equation}
The last equation holds because $a_{\i_{d+1}^{\widehat{\k}_d}}=1$ for any $\i_{d+1}$ and $\k_d$, since all simplicial complexes $C\sim\Delta(n;p_1,\ldots,p_{n-1})$ have exactly $n$ vertices. The expected number of $d$-simplices in $C^{(d)}$ is now:
\begin{equation}
\bar{f}_d=\mathbb{E}[f_d]=\mathbb{E}[\mathbb{E}[f_d|\phi_d]]=\mathbb{E}[p_d\phi_d]=p_d\bar{\phi}_d={n \choose d+1}\prod_{k=1}^{d}p_k^{{d+1\choose d-k}}.
\end{equation}

\subsection{Proof of Proposition~\ref{prop:4}}
\label{sec:A2}

Computations are similar to those in the previous section.
\begin{equation}
\begin{split}
\bar{b}_{\i_d}=&\mathbb{E}[b_{\i_d}]=\mathbb{E}\left[\prod_{k=1}^da_{\i_d^{\hat{k}}}\right]=\left.\mathbb{E}\left[\mathbb{E}\left[\prod_{k=1}^da_{\i_d^{\hat{k}}}\right|\a_{d-2}\right]\right]\\=&
\mathbb{E}\left[\prod_{k=1}^d\left.\mathbb{E}\left[a_{\i_d^{\hat{k}}}\right|\a_{d-2}\right]\right]=\mathbb{E}\left[\prod_{k=1}^dp_{\i_d^{\hat{k}}}b_{\i_d^{\hat{k}}}\right]=\prod_{k=1}^dp_{\i_d^{\hat{k}}}\mathbb{E}\left[\prod_{k=1}^db_{\i_d^{\hat{k}}}\right]\\
=&\prod_{k=1}^dp_{\i_d^{\hat{k}}}\mathbb{E}\left[\prod_{\k_2}a_{\i_d^{\widehat{\k}_2}}\right]=\prod_{k=1}^dp_{\i_d^{\hat{k}}}\prod_{\k_2}p_{\i_d^{\widehat{\k}_2}}\mathbb{E}\left[\prod_{\k_2}^db_{\i_d^{\widehat{\k}_2}}\right]=\ldots\\
=&\prod_{k=1}^dp_{\i_d^{\hat{k}}}\ldots\prod_{\k_{d-1}}p_{\i_d^{\widehat{\k}_{d-1}}}\mathbb{E}\left[\prod_{\k_{d-1}}^db_{\i_d^{\widehat{\k}_{d-1}}}\right]=\prod_{m=1}^{d-1}\prod_{\k_m}p_{\i_d^{\widehat{\k}_m}},
\end{split}
\end{equation}
since $b_{\i_d^{\widehat{\k}_{d-1}}}=1$ for any $\i_d$ and $\k_{d-1}$. Finally,
\begin{equation}
\bar{a}_{\i_d}=\mathbb{E}[{a}_{\i_d}]=\mathbb{E}[\mathbb{E}[{a}_{\i_d}|{b}_{\i_d}]]=\mathbb{E}[p_{\i_d}b_{\i_d}]=\prod_{m=0}^{d-1}\prod_{\k_m}p_{\i_d^{\widehat{\k}_m}}.
\end{equation}

 \subsection{Special case: $\Delta(3;p_1,p_2)$.}\label{sec:A3}
Theorem~\ref{thm:Kahle} in Section~\ref{sec:Khale} explicitly represents the Kahle's multi-parameter model of random simplicial complexes $\Delta(n;p_1,\ldots,p_{n-1})$ as an ERSC for any values of the parameters. This theorem is a direct corollary of Lemmas~\ref{lemma1}\&\ref{lemma2} in  Section~\ref{sec:maximum_entropy} which assert that any distribution is, in fact, the maximum-entropy distribution under certain constraints. Here we illustrate the difficulties that arise when one tries to compute the maximum-entropy distribution $\mathbb{P}_\Delta$ using Theorem~\ref{thm1}. We successfully used this method, which is based on computing the partition function, in Section~\ref{sec:simple examples} for the Erd\H{o}s--R\'{e}nyi random graphs and the random flag and Linial--Meshulam complexes. For Kahle's $\Delta$-ensemble, however, the partition function becomes intractable.

Consider a special case of the Kahle's model with $n=3$.  According to Theorem~\ref{thm:Kahle} and Proposition~\ref{prop:3}, $\Delta(3;p_1,p_2)$ is the maximum-entropy ensemble of simplicial complexes on $3$ vertices with three constraints:
\begin{equation}
	\label{eq:APPconstraints}
	\mathbb{E}[f_1]=3p_1, \hspace{5mm} \mathbb{E}[f_2]=p_1^3p_2, \hspace{5mm} \mathbb{E}[{\phi}_2]=p_1^3.
\end{equation}
Let us compute the corresponding maximum-entropy distribution $\mathbb{P}_{\Delta(3;p_1,p_2)}$ using Theorem~\ref{thm1}. The partition function $Z$ in (\ref{eq:ERSC}) is
\begin{equation}
\begin{split}
Z(\theta_1,\theta_2,\theta_3)=&\sum_{C\in\mathcal{C}_3} e^{-H(C)}=\sum_{C\in\mathcal{C}_3} e^{-\theta_1f_1(C)-\theta_2f_2(C)-\theta_3\phi_2(C)}\\
=&1+3e^{-\theta_1}+3e^{-2\theta_1}+e^{-3\theta_1-\theta_3}+e^{-3\theta_1-\theta_2-\theta_3},
\end{split}
\end{equation}
where the last equality follows from Figure~\ref{fig6}, where we list all complexes in $\mathcal{C}_3$ along with the corresponding values of observables $f_1, f_2,$ and $\phi_2$. To find parameters $\theta_1$, $\theta_2$, and $\theta_3$, which are the Lagrange multipliers coupled to observables $f_1$, $f_2$, and $\phi_2$, we need to solve the system of three equations (\ref{eq:parametersC}), where $\bar{x}_i$ are replaced by the expected values in (\ref{eq:APPconstraints}):
\begin{align}
\frac{3e^{-\theta_1}+6e^{-2\theta_1}+3e^{-3\theta_1}e^{-\theta_3}+3e^{-3\theta_1}e^{-\theta_2}e^{-\theta_2}}{1+3e^{-\theta_1}+3e^{-2\theta_1}+e^{-3\theta_1}e^{-\theta_3}+e^{-3\theta_1}e^{-\theta_2}e^{-\theta_3}} &=3p_1, \nonumber \\
\frac{e^{-3\theta_1}e^{-\theta_2}e^{-\theta_3}}{1+3e^{-\theta_1}+3e^{-2\theta_1}+e^{-3\theta_1}e^{-\theta_3}+e^{-3\theta_1}e^{-\theta_2}e^{-\theta_3}}&=p_1^3p_2, \label{horror}\\
\frac{e^{-3\theta_1}e^{-\theta_3}+e^{-3\theta_1}e^{-\theta_2}e^{-\theta_3}}{1+3e^{-\theta_1}+3e^{-2\theta_1}+e^{-3\theta_1}e^{-\theta_3}+e^{-3\theta_1}e^{-\theta_2}e^{-\theta_3}}&=p_1^3. \nonumber
\end{align}
After some tedious algebra, one can show that the solution is
\begin{equation}
e^{-\theta_1}=\frac{p_1}{1-p_1}, \hspace{5mm} e^{-\theta_2}=\frac{p_2}{1-p_2}, \hspace{5mm}
e^{-\theta_3}=1-p_2.
\end{equation}
The partition function simplifies then to
\begin{equation}
Z=\frac{1}{(1-p_1)^3}.
\end{equation}
Therefore, the maximum-entropy distribution is
\begin{equation}\label{P_n=3}
\begin{split}
\mathbb{P}_{\Delta(3;p_1,p_2)}(C)=&\frac{e^{-H(C)}}{Z}=
\frac{e^{-\theta_1f_1(C)-\theta_2f_2(C)-\theta_3\phi_2(C)}}{Z}\\
=&(1-p_1)^3\left(\frac{p_1}{1-p_1}\right)^{f_1(C)}\left(\frac{p_2}{1-p_2}\right)^{f_2(C)}\left(1-p_2\right)^{\phi_2(C)}\\
=&p_1^{f_1(C)}(1-p_1)^{3-f_1(C)}p_2^{f_2(C)}(1-p_2)^{\phi_2(C)-f_1(C)}.
\end{split}
\end{equation}
As expected, the obtained distribution coincides with the distribution in (\ref{eq:Delta}), where $n=3$ and $\phi_1(C)=3$. Unfortunately, this method of computing $\mathbb{P}_\Delta$ cannot be extended to the general case $\Delta(n;p_1,\ldots,p_{n-1})$: when $n>3$ the partition function $Z$ and the corresponding analog of system (\ref{horror}) become analytically intractable. This makes Lemmas~\ref{lemma1}\&\ref{lemma2} an essential tool for proving Theorem~\ref{thm:Kahle} and a more general Theorem~\ref{thm:3}.

\begin{figure}
	\centerline{\includegraphics[width=80mm]{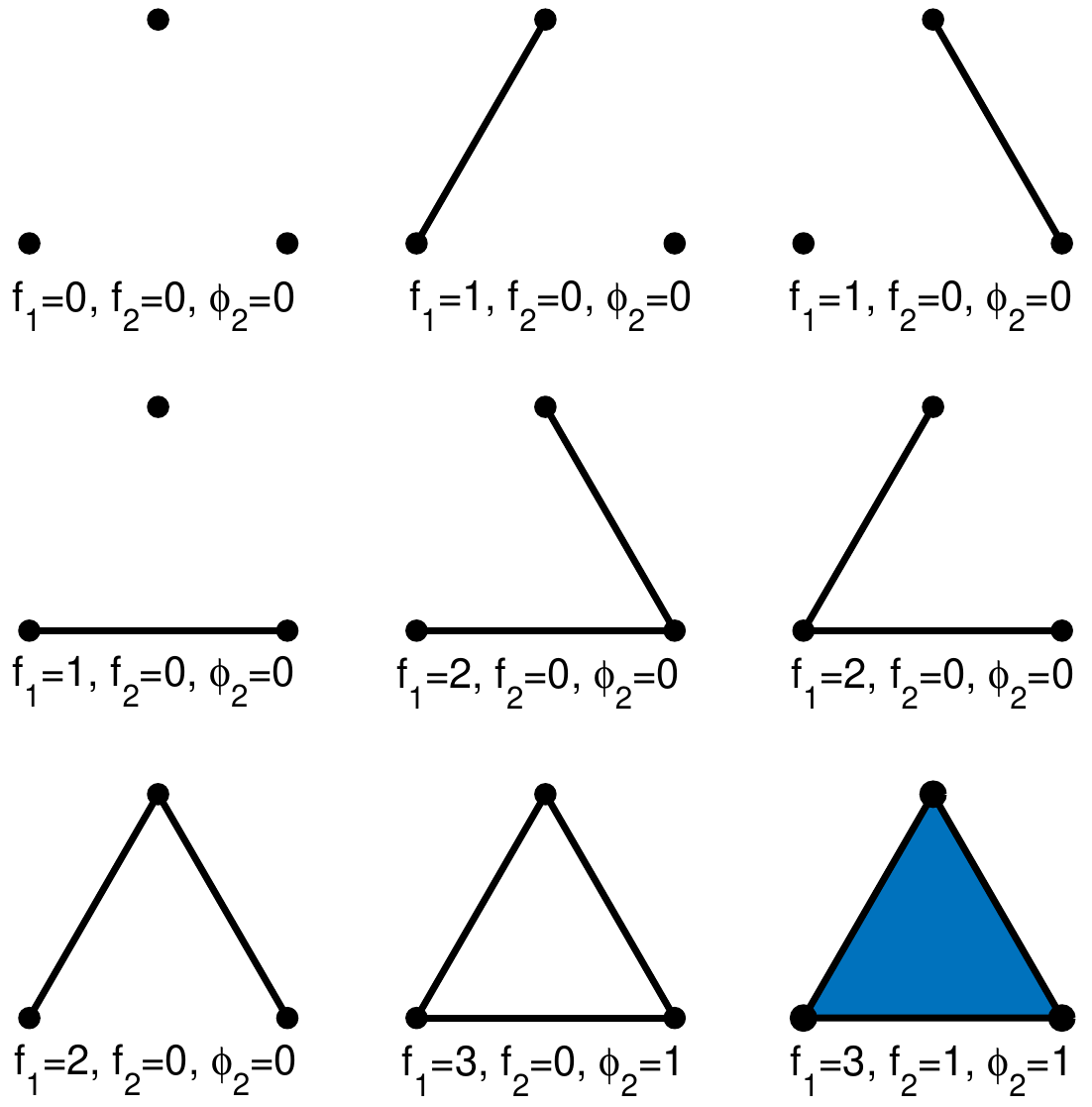}}
	\caption{\small \textbf{Simplicial complexes on three vertices.} Here we show all $C\in\mathcal{C}_3$ and the values of $f_1$ (number of 1-simplices), $f_2$ (number of 2-simplices), and $\phi_2$ (number of 2-simplices in $C^{[2]}$) for each $C$. }
	\label{fig6}
\end{figure}

\subsection{ERSC($\mathcal{C}_3,\{f_1,f_2\},\{\bar{f}_1,\bar{f}_2\}$)}\label{sec:A4}
Here we derive the maximum-entropy distribution on $\mathcal{C}_3$ only under the constraints of the first type, $\mathbb{E}[f_1]=\bar{f}_1$ and $\mathbb{E}[f_2]=\bar{f}_2$, and show that it is different from $\mathbb{P}_{\Delta(3,p_1,p_2)}$. This explicitly demonstrates that the constraint of the second type, $\mathbb{E}[\phi_2]=\bar{\phi}_2$, is not redundant, and, if dropped, the resulting maximum-entropy ensemble will no longer be $\Delta$.

Let $(\mathcal{C}_3,\widetilde{\mathbb{P}})$ be  the maximum-entropy ensemble  ERSC($\mathcal{C}_3,\{f_1,f_2\},\{\bar{f}_1,\bar{f}_2\}$). In other words, $\widetilde{\mathbb{P}}$ is the maximum-entropy distribution on $\mathcal{C}_3$ under the constraints
\begin{equation}
\mathbb{E}[f_1]=\bar{f}_1 \hspace{5mm} \mbox{and}  \hspace{5mm} \mathbb{E}[f_2]=\bar{f}_2.
\end{equation}
We can find $\widetilde{\mathbb{P}}$ using Theorem~\ref{thm1} as in the previous section. The partition function
\begin{equation}
\begin{split}
\widetilde{Z}(\theta_1,\theta_2)=&\sum_{C\in\mathcal{C}_3}e^{-\widetilde{H}(C)}=\sum_{C\in\mathcal{C}_3}e^{-\theta_1f_1(C)-\theta_2f_2(C)}\\
=& \left(1+e^{-\theta_1}\right)^3+e^{-3\theta_1}e^{-\theta_2},
\end{split}
\end{equation}
where the last equality is obtained with the help of Figure~\ref{fig6}. The system of equations (\ref{eq:parametersC}) for $\theta_1$ and $\theta_2$ is then
\begin{equation}
\begin{split}
3\frac{e^{-\theta_1}\left(1+e^{-\theta_1}\right)^2+e^{-3\theta_1}e^{-\theta_2}}{\left(1+e^{-\theta_1}\right)^3+e^{-3\theta_1}e^{-\theta_2}}=& \bar{f}_1,\\
\frac{e^{-3\theta_1}e^{-\theta_2}}{\left(1+e^{-\theta_1}\right)^3+e^{-3\theta_1}e^{-\theta_2}}=& \bar{f}_2,
\end{split}
\end{equation}
and one can check that the solution is given by
\begin{equation}
e^{-\theta_1}=\frac{\frac{\bar{f}_1}{3}-\bar{f}_2}{1-\frac{\bar{f}_1}{3}} \hspace{5mm} \mbox{and}  \hspace{5mm} e^{-\theta_2}=\frac{\bar{f}_2(1-\bar{f}_2)^2}{\left(\frac{\bar{f}_1}{3}-\bar{f}_2\right)^3}.
\end{equation}
The partition function, as a function of $\bar{f}_1$ and $\bar{f}_2$, is then
\begin{equation}
\widetilde{Z}=\frac{\left(1-\bar{f}_2\right)^2}{\left(1-\frac{\bar{f}_1}{3}\right)^3}.
\end{equation}
Therefore,  the maximum-entropy distribution is
\begin{equation}
\begin{split}
\widetilde{\mathbb{P}}=&\frac{e^{-\widetilde{H}(C)}}{\widetilde{Z}}=\frac{e^{-\theta_1f_1(C)-\theta_2f_2(C)}}{\widetilde{Z}}\\
=& \frac{\left(1-\frac{\bar{f}_1}{3}\right)^3}{\left(1-\bar{f}_2\right)^2}\left(\frac{\frac{\bar{f}_1}{3}-\bar{f}_2}{1-\frac{\bar{f}_1}{3}}\right)^{f_1(C)}\left(\frac{\bar{f}_2(1-\bar{f}_2)^2}{\left(\frac{\bar{f}_1}{3}-\bar{f}_2\right)^3}\right)^{f_2(C)}\\
=&\left(\frac{\bar{f}_1}{3}-\bar{f}_2\right)^{f_1(C)-3f_2(C)}\left(1-\frac{\bar{f}_1}{3}\right)^{3-f_1(C)}\bar{f}_2^{f_2(C)}\left(1-\bar{f}_2\right)^{2f_2(C)-2}.
\end{split}
\end{equation}
This is a general expression for $\widetilde{\mathbb{P}}$ for any expected values $\bar{f}_1$ and $\bar{f}_2$. In the special case, when $\bar{f}_1$ and $\bar{f}_2$ coincide with the corresponding values for $\Delta(3;p_1,p_2)$ in (\ref{eq:APPconstraints}), that is $\bar{f}_1=3p_1$ and $\bar{f}_2=p_1^3p_2$, the distribution $\widetilde{\mathbb{P}}$ reduces to
\begin{equation}
\widetilde{\mathbb{P}}=p_1^{f_1(C)}(1-p_1)^{3-f_1(C)}p_2^{f_2(C)}(1-p_1^2p_2)^{f_1(C)-3f_2(C)}(1-p_1^3p_2)^{2f_2(C)-2}.
\end{equation}
We see that $\widetilde{\mathbb{P}}\neq\mathbb{P}_{\Delta(3;p_1,p2)}$. This means that the two maximum-entropy ensembles $\Delta(3;p_1,p_2)$ and $(\mathcal{C}_3,\widetilde{\mathbb{P}})$ are different,
\begin{equation}
\mathrm{ERSC}(\mathcal{C}_3,\{f_1,f_2,\phi_2\},\{\bar{f}_1,\bar{f}_2,\bar{\phi}_2\})\neq\mathrm{ERSC}(\mathcal{C}_3,\{f_1,f_2\},\{\bar{f}_1,\bar{f}_2\}),
\end{equation}
and, more generally,
\begin{equation}
\mathrm{ERSC}\left(\mathcal{C}_n,\left\{\{f_d\}_{d=1}^{n-1},\{\phi_d\}_{d=2}^{n-1}\right\},\left\{\{\bar{f}_d\}_{d=1}^{n-1},\{\bar{\phi}_d\}_{d=2}^{n-1}\right\}\right)\neq\mathrm{ERSC}\left(\mathcal{C}_n,\{f_d\}_{d=1}^{n-1},\{\bar{f}_d\}_{d=1}^{n-1}\right).
\end{equation}

\bibliography{bib}
\end{document}